\newcommand{\eq}{\triangleq}
\newcommand{\field}[1]{\mathbb{#1}}
\newcommand{\R}{\field{R}}
\newcommand{\N}{\field{N}}
\newcommand{\U}{\field{U}}
\newcommand{\B}{\field{B}}
\newcommand{\Prob}{\mathbf{Pr}}
\newcommand{\E}{\mathbf{E}}
\newcommand{\K}{\mathcal{K}}
\newcommand{\hfs}{\hfill\ensuremath{\square}}
\newtheorem{thm}{Theorem}
\newtheorem{lem}{Lemma}
\newtheorem{assumption}{Assumption}
\title{Stochastic Stability of Event-triggered Anytime Control}
\author{Daniel E.~Quevedo,~\IEEEmembership{Member,~IEEE,}\thanks{D.\ Quevedo
    is with the School of Electrical Engineering \& Computer Science, The
    University of Newcastle, Australia, {\tt dquevedo@ieee.org}.  V.\ Gupta and
    W.\ J.\ Ma are with
    the Department of Electrical Engineering, University of Notre Dame, USA, {\tt
      vgupta2@nd.edu, wma1@nd.edu}. S.\ Y\"uksel
    is with the Department of Mathematics and Statistics, Queen's University,
    Kingston, ON K7L 3N6 Canada, {\tt yuksel@mast.queensu.ca}.  Research
    supported for the first author under Australian Research Council's Discovery Projects funding  scheme (project number DP0988601) and in part for the second and third authors  by NSF awards 0846631 and
    0834771.}  Vijay
  Gupta,~\IEEEmembership{Member,~IEEE,} Wann-Jiun Ma,~\IEEEmembership{Student
    Member,~IEEE,} Serdar Y\"uksel,~\IEEEmembership{Member,~IEEE}}
\begin{document}

\maketitle

\begin{abstract}
  We investigate control of a non-linear process when
  communication and  processing capabilities are limited. The sensor  communicates with a
  controller
  node
  through an erasure channel which introduces i.i.d.\ packet dropouts. Processor
  availability for control is random and, at times, insufficient to calculate
  plant inputs. To make efficient use of communication and processing resources,
  the sensor only transmits when the plant state lies outside a bounded target
  set. Control calculations are triggered by the received data. If
  a plant state measurement is successfully received and while
  the processor is available for control,  the  algorithm recursively calculates a sequence
  of tentative   plant
  inputs, which are stored in a buffer for potential future use. This safeguards
  for time-steps when the  processor is unavailable for control. We derive
  sufficient conditions on system parameters for stochastic stability of the
  closed loop and illustrate   performance gains   through numerical studies.
\end{abstract}

%\IEEEpeerreviewmaketitle

\linespread{1.3}

\section{Introduction}
Due to the tight coupling among the cyber and the physical cores in many cyber-physical systems, it is imperative to develop systematic design principles for controllers with limited communication and processing resources. Both the areas of control with communication constraints and control with limited and time-varying processor availability have accordingly received much attention.

\par Control design in the presence of practical communication channels and
protocols
has been studied in the area of networked control systems. % (see, e.g.,~\cite{antbai07,frajav08,chejoh11} and the references therein). Communication channels introduce several effects such as quantization (see, e.g.,~\cite{naifag07}), delays (e.g.,~\cite{lucray90,shi09b}), data loss (see, e.g.,~\cite{gupdan09,quenes11a}), and noise (see, e.g.,~\cite{bramid07,gooque08}), which may severely degrade control performance~\cite{mardah08,sahmit06,heetee10}, possibly even leading to loss of stability.
Of particular interest to the present work is the literature on
control across analog erasure channels; %Such a channel model is used to model stochastic data loss due to effects such as fading, as induced by wireless channels. Much work has analyzed and designed controllers for various configurations when such a channel model is used
see, e.g.,~\cite{gupdan09,imeyuk06,schsin07,quenes12a}.
Apart from arising from data transmission across a wireless channel, data loss may
also arise due to congestion in a communication network, possibly transmitted by a control loop. % (see, e.g., the discussion on  IEEE 802.15.4 in~\cite{aramaz13}). By transmitting data rapidly, a control loop that uses a communication network may, in fact, exacerbate the congestion and lead to higher rates of data loss~\cite{liamoy02a}.
To minimize this source of data loss, one can conceive event
triggered communication schemes in which sensors transmit
information only if the system state exceeds a certain bound; see,
e.g.,~\cite{lilem10,tabuad07,xuhes04,ramsan11b,xiagup13}. Recently, work has also been done on designing event triggering rules
to ensure stability in the face of data dropouts. However,
most works are restricted to  single integrator dynamics~\cite{rabjoh09,bliall11} or are numerical studies~\cite{cerhen08}. %A complete treatment is still lacking.

\par On the other hand, various works have also considered the impact of
limited or time-varying processing power on closed-loop control~\cite{govfer99,henake04,andseu13a}. % considered the question of bounding the processing time that is required to solve the optimization problem in model predictive control to a specified accuracy. Similarly,~\cite{,hencer02} studied the trade-off inherent in solving the optimization problem exactly (thus, obtaining the control input sequence more precisely) and in solving the problem more often.
Interestingly, event-triggered and self-triggered updates of the control
inputs have also been proposed to ensure less demand on the processor
on average by calculating the control input on demand~\cite{cervel10,tabuad07}. The direction of anytime control has also
shown promise~\cite{bhabal04,grefon07,gupluo13,quegup13a}. Such algorithms calculate a coarse control input
even with limited processing resources and refine the input as more processing resources become
available. The quality of control inputs is thus time-varying, but no control
input is obtained only rarely. %Initially proposed for linear processes and controllers, it is only recently that such algorithms have appeared for non-linear processes~\cite{}.

\begin{figure}[t]
  \centering
  \includegraphics[width=.8\textwidth]{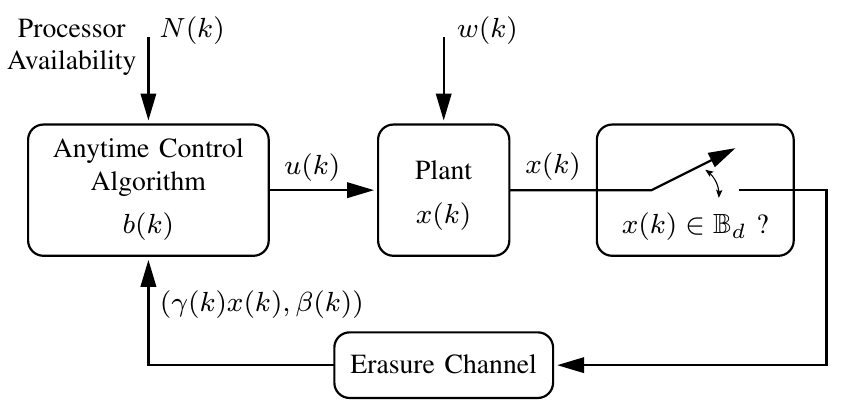}
  \caption{Event-triggered Anytime Control with Unreliable Communications.}
  \label{fig:scheme}
\end{figure}

\par Notwithstanding the advances discussed above, relatively few works have considered control design under both limited
communication and processing resources. Optimal control design for arbitrary non-linear processes under
communication and processing constraints is likely a challenging problem,
since certainty equivalence would not hold in general~\cite{ramsan11a}.
Accordingly, in the present note we consider a pre-designed control law, and
focus on the implementation of  this controller in the presence of both communication and
processing limitations.
As depicted in Fig.~\ref{fig:scheme}, we consider a discrete-time non-linear plant being controlled across a
communication
network that stochastically erases  data transmitted across it. To reduce
congestion in the network, the sensor employs an event triggered communication strategy. However, due to time-varying availability of the
processing resources, it is not guaranteed that the processor can calculate a
control input at all time steps when the sensor transmits (even if the network does
not erase the data). To maximally utilize the  processing resources, the
controller employs an anytime control algorithm. Under such a setting, we
analyze stochastic  stability of the closed loop. Our  main stability results
are stated
in terms of an
inequality that relates open-loop growth of the plant state, packet erasure
probability, and parameters of the processor availability model. For the particular
case where   processing resources are available at every time step,  our result reduces
to a sufficient condition for stochastic stability of non-linear control
in where sensor communicates according to an event-triggering
condition  across an analog i.i.d.\
erasure link. This may be of independent interest.

%--------
%The chief mathematical tools employed are those of  {\it random-time  state-dependent stochastic drift} criteria, as developed in \cite{yukmey13a} by extending the deterministic  drift criteria presented in \cite{meytwe94,meytwe09}. This methodology requires the definition of a sequence of increasing stopping times $\{\tau_z, z \in \N_0\}$ defined on an appropriate filtration, which will denote the random times when important events in the system take place. Such an approach has been shown to be effective for stochastic stabilization of linear systems over communication channels; see \cite{yuk10} and \cite{yukmey13a}. In the present work, we extend these tools to study a system with random event-driven updates.

% \par The remainder of this note  is organized as follows.
% Section~\ref{sec:prob_form}  presents the problem formulation. The event
% triggered control algorithm is described in
% Section~\ref{sec:event-driven-anytime}. Stochastic stability and stationarity of the state is analyzed with both a baseline
% algorithm and the anytime algorithm in Section~\ref{sec:analysis}. %Section~\ref{sec:stat-exist-an} extends these results to consider stationarity and existence of an invariant probability measure.
% Section~\ref{sec:numerical-examples} illustrates the
% results numerically and we conclude in Section~\ref{sec:conclusions}.

\paragraph*{Notation}
\label{sec:notation}
We write $\N$ for $\{1, 2,  \ldots\}$ and $\N_0$ for $\N \cup \{0\}$. $\R$ represents the real numbers and
$\R_{\geq 0}\eq [0,\infty)$. The $p\times p$ identity matrix is
denoted via $I_p$,  $0_{p \times q}$ is the $p\times q$  all-zeroes
matrix, $0_p\eq 0_{p\times p}$, and $\mathbf{0}_p\eq 0_{p\times 1}$.  The notation
$\{x\}_{\K}$ stands for
$\{x(k) \;\colon k \in \K\}$, where $\K\subseteq \N_0$.
We  adopt the conventions
$\sum_{k=\ell_1}^{\ell_2}a_k = 0$ and $\prod_{k=\ell_1}^{\ell_2}a_k = 1$,  if $\ell_1 > \ell_2$ and irrespective of
$a_k \in \R$. The superscript $^T$  refers to transpose. The
Euclidean norm of
a vector $x$ is denoted via $|x|=\sqrt{x^Tx}$.
 A
function $\varphi\colon \R_{\geq 0}\to \R_{\geq 0}$ is of
\emph{class-}$\mathscr{K}_\infty$ ($\varphi \in \mathscr{K}_\infty$), if it is
continuous, zero at zero, strictly increasing, and  unbounded. The probability of an event
$\Omega$ is denoted by
$\Prob\{\Omega \}$ and the conditional probability of $\Omega$ given
 $\Gamma$ by $\Prob\{\Omega\,|\,\Gamma \}$. The  expected value of a
 random variable $x$ given
 $\Gamma$ is denoted by  $\E\{x  \,|\, \Gamma \}$, while $\E\{x\}$ refers
 to the unconditional expectation.   The expression $x \sim \nu$ denotes that the random variable $x$ has probability distribution $\nu$ and $\E_{\nu}\{x\}$ denotes the expectation under probability distribution $\nu$.

%When the meaning is clear from the context, we  use
%  the same notation for
%  random variables and their realizations. %What is meant  depends on the context.

\section{Event-driven Control over an Erasure Channel}
\label{sec:prob_form}

We consider non-linear (and possibly open-loop unstable) plants, sampled periodically with sampling interval
  $T_s>0$ (see Fig.~\ref{fig:scheme}),% and described in discrete-time via
\begin{eqnarray}
  \label{eq:process}
  x(k+1) = f(x(k),u(k)),\quad k\in\N_0,
\end{eqnarray}
where $x\in \R^n$ is the
   plant state, and $u\in \mathbb{U}\subseteq \R^p$ with $\mathbf{0}_p\in\U$ is the (possibly
   constrained) plant input. The initial state
 $x(0)$ is arbitrarily distributed. The plant is equipped with a sensor, which
has direct access to the plant state at the sampling instants $k\in\N_0$.
\par To save on
  communication expenditure, the sensor  adopts an
event-triggered transmission strategy, in which the sensor transmits only at
instances $ k\in \N_0$, where
%\begin{eqnarray}
%  \label{eq:1}
$ x(k) \not \in \mathbb{B}_d\eq \{x\in\R^n \colon |x| < d\}$.
%\end{eqnarray}
This transmission is across an erasure channel which introduces random packet
dropouts. %We now describe both triggering mechanisms in detail.
To keep communication costs low,  the controller does  not send
acknowledgments back to the sensor and no re-transmissions are allowed.
We introduce  two discrete random processes, namely
$\{\gamma\}_{\N_0}$ and $\{\beta\}_{\N_0}$.
The binary transmission success  process
$\{\gamma\}_{\N_0}$ %, is related to channel fading and interference, and serves to
describes packet loss: a successful transmission at time $k$ is denoted by  $\gamma(k)=1$ and a packet erasure by $\gamma(k)=0$. The ternary process $\{\beta\}_{\N_0}$ incorporates the event-based transmission
rule, % described above and is defined via
\begin{eqnarray}
  \label{eq:13}
  \beta(k)=
  \begin{cases}
    \gamma(k)&\text{if the sensor transmitted at time $k$,}\\
    2 &\text{if the sensor did not transmit at time $k$.}
  \end{cases}
\end{eqnarray}
Thus, $\beta(k)=2 \Leftrightarrow |x(k)|<d$.
We assume that $\beta(k)$ is known to the
controller at time $k$ through monitoring of received energy in the sensor transmission band.
 Transmission outcomes trigger  the functions carried out by the
controller. The scalar $d \in\R_{\geq 0}$ is a design parameter, which determines communication
channel utilization and control performance. Elucidating the trade-off between these quantities is
one of the motivations of the present work.

\par When implementing discrete-time control systems, it is generally  assumed that
the processing resources available to the
controller are such that the  desired
control law can be evaluated within
a fixed time-delay, say $\delta\in (0,T_s)$. %\footnote{Recall that  fixed delays can be easily incorporated into the model~(\ref{eq:process}) by aggregating  the previous plant input to the plant state, see also\cite{nilber98}. For ease  of exposition, throughout this work, we will use the standard discrete-time notation as in~(\ref{eq:process}).}
However, in practical
networked and embedded systems, the processing resources
available for control calculations may vary and, at times, be insufficient to
generate a control input within the prescribed time-delay $\delta$ \cite{andseu13a}.
 %One possible remedy for this issue would be to redesign the control system for a worst case by choosing  %larger values of $\delta$ and, possibly, $T_s$. Clearly, such an approach
%will, in general, lead to unnecessary conservativeness and associated poor
%performance.
In the sequel we will further develop our anytime control
algorithm of \cite{quegup13a,quegup11a} to seek
favorable trade-offs between processor and communication availability, and
control performance. We will assume that the    plant model~(\ref{eq:process})
% \begin{eqnarray}
%       \label{eq:15}
%       x(k+1) =  f(x(k),u(k),\mathbf{0}_m)
%     \end{eqnarray}
is globally stabilizable via state feedback.

\begin{assumption}[Stabilizability]
\label{ass:CLF}
There exist  $V\colon
\R^n\to\R_{\geq 0}$,  $\varphi_1, \varphi_2\in\mathscr{K}_\infty$,
  $\kappa \colon \R^n\to \mathbb{U}$, and a constant $\rho \in [0,1)$,
 such that
\begin{equation}
  \label{eq:3}
  \begin{split}
    \varphi_1(|x|)\leq V(x)\leq \varphi_2(|x|), \quad &\forall x\in\R^n,\\
     V(f(x,\kappa(x))) \leq \rho V(x), \quad
    &\forall x \notin \mathbb{B}_d.
  \end{split}
\end{equation}
\end{assumption}
\vspace{2mm}
To encompass processing constraints, we will assume that the controller needs
processor time to carry out
mathematical computations, such as evaluating $\kappa$. However, input-output
operations  and simple operations at a bit level, e.g., writing data into
buffers, shifting buffer contents and setting values to zero, do not require
processor time.

 \par Before proceeding we note that a direct implementation of $\kappa$  used in
Assumption~\ref{ass:CLF}, when processing
resources are time varying, sensor transmissions are event-triggered, and the
sensor transmissions are affected by dropouts,  results in the {\emph{baseline
    event-based  algorithm}}
%$u(k)= \kappa(x(k))$ if $\beta(k)=1$ and processor is available, and $u(k)= \mathbf{0}_p$ otherwise,
\begin{equation}
  \label{eq:4}
  u(k) =
  \begin{cases}
    \kappa(x(k)) &\text{if  $\beta(k)=1$ and processor is available,}\\
    \mathbf{0}_p&\text{otherwise,}
  \end{cases}
\end{equation}
where the symbol
 $u(k)$ with $k\in\N_0$ denotes  the plant input which is applied during the
 interval
 $[kT_s +\delta, (k+1)T_s +\delta)$.
Whilst the baseline algorithm is  intuitive, our
previous works\cite{quegup11a,quegup13a} suggest that it will
be outperformed by
more elaborate control formulations.

\section{Event-driven Anytime Control Algorithm}
\label{sec:event-driven-anytime}
The anytime algorithm is based on the following idea:  control
calculations are triggered   whenever a new measurement is
successfully received. However, the precise number of control inputs calculated depends on the processing resources available.  At time intervals when the
controller is provided with more processing resources than are needed to evaluate
the current control input, the algorithm  calculates a  sequence of tentative
future plant  inputs.  The sequence is stored in a local
buffer and may be  used
when, at some future time
steps, the  processor
availability precludes any control calculations even though new state information is received. % from the sensor.
% Based on our previous works\cite{quegup12a,quema13a}, we will investigate an
% anytime sequence-based control algorithm.

\par In our recent work\cite{quegup13a,quegup11a}, we analyzed this algorithm
for the simpler
case where the controller has direct access to plant state $x(k)$ at all
instants $k\in \N_0$. In the present work we alleviate this assumption by considering that
sensor transmissions are event-triggered and through a communication
channel which introduces random dropouts. In addition, to save energy and
processing resources, the controller is event-triggered.  More precisely,
the actions taken by the controller are guided by the value of $\beta(k)$ and
the processor availability.

\par If $\beta(k)=1$,
then the controller  uses $x(k)$ to calculate tentative control values, provided the
processor is available for control. This sequence will be stored in a buffer. If
the processor is not available or $\beta(k)=0$, then the controller does
not do any calculations and the plant
input is provided by previously calculated buffered values (if available). The instances
$\beta(k)=2$ refer to  situations where the plant state is at the desired region
$\mathbb{B}_d$, and $x(k)$ is not sent to the controller.  In this scenario, the plant input
is set to zero, the
buffer is emptied, and the controller is switched off until the system state moves
out of the desired region $\mathbb{B}_d$ and a new state measurement is
received.  Fig.~\ref{fig:triggering} outlines the proposed algorithm. In this
figure,
\begin{equation*}
     S\eq
\begin{bmatrix}
    0_p & I_p& 0_p &\hdotsfor{1} &0_p\\
    \vdots & \ddots & \ddots &\ddots  & \vdots\\
 %    0_p &  \dots      &  0_p &I_p  & 0_p\\
    0_p & \hdotsfor{2}       &  0_p & I_p\\
 0_p &\hdotsfor{3} &  0_p
\end{bmatrix}\in\R^{\Lambda  p\times \Lambda p},\quad
%e_1 \eq
%\begin{bmatrix}
%  I_p\\0_p\\ \vdots \\0_p
%\end{bmatrix}
%\in\R^{\Lambda  p\times  p},
  b(k) =
  \begin{bmatrix}
    b_1(k)\\b_2(k)\\\vdots \\ b_{\Lambda}(k)
  \end{bmatrix},
\end{equation*}
where $\{b\}_{\N_0}$ denote the buffer states for a given buffer size $\Lambda\in\N$ and each $b_j(k)\in\R^p$, $j\in\{1,\dots,\Lambda\}$.

\begin{figure}[t]
  \centering
  \includegraphics[width=\textwidth]{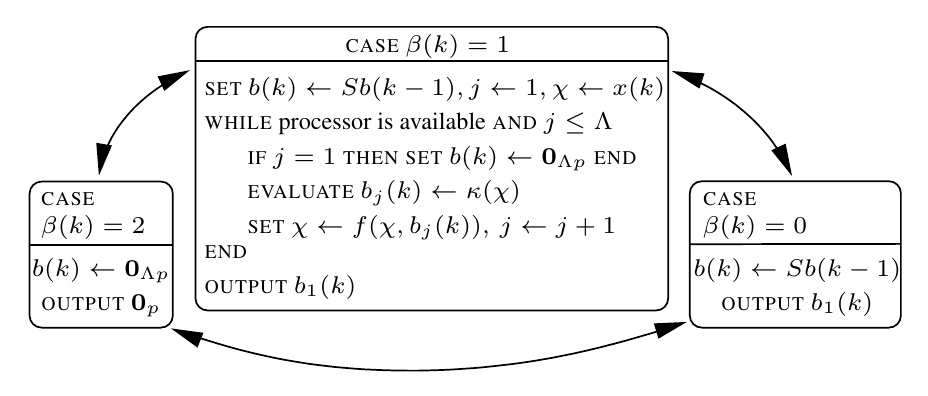}
  \caption{Operating modes of anytime Algorithm A$_1$ during the time interval
    $[kT_s, (k+1)T_s)$.}
  \label{fig:triggering}
\end{figure}

\linespread{1.25}
\begin{figure}[h!]
 % if we need space, use {table}
\noindent\rule{\linewidth}{0.2mm}\vspace{-2mm}
%{\textbf{Algorithm $\textsc{A}_{1}$}} \vspace{-5mm}\\
%\noindent\rule{\linewidth}{0.2mm}
\begin{enumerate}[\setlabelwidth{Step 5}]
\item[Step 1:]  At time $t=0$,
  \par\hspace{1cm} \textsc{set}  $b(-1)\leftarrow \mathbf{0}_{\Lambda  p}$, $k\leftarrow 0$
\item[Step 2:] \label{step:timek}
  \textsc{if}  $t \geq k T_s$, \\
   \textsc{then}
   \par\hspace{1cm} \textsc{switch} $\beta(k)$
  \par\hspace{2cm} \textsc{case} $2$,
  \par\hspace{3cm} \textsc{set} $b(k)\leftarrow\mathbf{0}_{\Lambda p}$, $j\leftarrow 1$;
  \par\hspace{3cm} \textsc{goto} Step 4;
   \par\hspace{2cm} \textsc{case} $0$,
     \par\hspace{3cm}  \textsc{set}  $j\leftarrow 1$,   $b(k)\leftarrow Sb(k-1)$;
  \par\hspace{3cm} \textsc{goto} Step 4;
  \par\hspace{2cm} \textsc{otherwise}
  \par\hspace{3cm}  \textsc{input} $x(k)$;
  \par\hspace{3cm}  \textsc{set}  $\chi\leftarrow x(k)$, $j\leftarrow 1$,
   $b(k)\leftarrow Sb(k-1)$;
   \par\hspace{1cm} \textsc{end}\\
  \textsc{end}

%\item[Step 3:] \label{step:repeat}
%  \textsc{while} ``sufficient processor time is available''
%  and  $j\leq \Lambda$ and time $t < (k+1) T_s$,
%  \par\hspace{1cm} \textsc{set} $v \leftarrow V(\chi)$,
%  where $V$ is the Lyapunov  function in~(\ref{eq:3});
%  \par \hspace{1cm}  Use $v$ and $\chi$ to find $u_j(k)$, such that
%  \begin{eqnarray}
%    \label{eq:14}
%    V(f(\chi,u_j(k),\mathbf{0}_m))\leq \rho v;
%  \end{eqnarray}
%
%  \par\hspace{1cm} \textsc{if} $j=1$, \textsc{then}
%  \par \hspace{2cm} \textsc{output} $u_{1}(k)$;
%  \par \hspace{2cm} \textsc{set} $b(k)\leftarrow\mathbf{0}_{\Lambda  p}$;
%  \par \hspace{1cm} \textsc{end}
%
%  \par \hspace{1cm} \textsc{set} $b_j(k)\leftarrow u_j(k)$;
%
%  \par \hspace{1cm} \textsc{if} ``sufficient processor time is not available'' or
%  $t \geq (k+1) T_s$, \textsc{then}
%  \par \hspace{2cm} \textsc{goto} Step 5
%  \par \hspace{1cm} \textsc{end}
%
%  \par \hspace{1cm} \textsc{set} $\chi \leftarrow  f(\chi,u_j(k),\mathbf{0}_m)$,
%  $j\leftarrow j+1$;\\
%  \textsc{end}

\item[Step 3:] \label{step:repeat2}
  \textsc{while} ``sufficient processor time is available'' and  $j\leq \Lambda$ and
  time $t < (k+1) T_s$,

  \par\hspace{1cm} {\textsc{evaluate} $u_j(k)=\kappa(\chi)$;}

  \par\hspace{1cm} \textsc{if} $j=1$, \textsc{then}
  \par\hspace{2cm} \textsc{output} $u_{1}(k)$;
   \par\hspace{2cm} \textsc{set} $b(k)\leftarrow\mathbf{0}_{\Lambda p}$;
  \par\hspace{1cm} \textsc{end}

  \par\hspace{1cm} \textsc{set} $b_j(k)\leftarrow u_j(k)$;

  \par\hspace{1cm}  \textsc{if} ``sufficient processor time is not available'' or
  $t \geq (k+1) T_s$, \textsc{then}
  \par\hspace{2cm} \textsc{goto} Step 5;
  \par\hspace{1cm} \textsc{end}

  \par\hspace{1cm} \textsc{set} $ \chi \leftarrow  f(\chi,u_j(k))$,
  $j\leftarrow j+1$;\\
  \textsc{end}

\item[Step 4:]
  \textsc{if} $j=1$, \textsc{then}
  \par\hspace{1cm} \textsc{output} $b_{1}(k)$;\\
  \textsc{end}

\item[Step 5:]
  %\label{Step5}
  \textsc{set} $k \leftarrow k+1$ and \textsc{goto} Step 2;
\end{enumerate}
\vspace{-3mm}
\noindent\rule{\linewidth}{0.2mm}
\caption{Algorithm A$_1$}
\label{alg:1}
\end{figure}
\linespread{1.3}

\par For future use, we will
denote by
$N(k)\in\{0,1,\dots,\Lambda\}$ the total number of iterations of the while-loop which are carried out during the
interval $t\in [kT_s,(k+1)T_s)$. Thus, as described above,
if $N(k)\geq 1$,
then the entire sequence of
tentative controls is
%\begin{eqnarray}
%  \label{eq:4b}
$    \{b_{1}(k), b_2(k),  \dots ,  b_{N(k)} (k)\} $
%\end{eqnarray}
and the plant input is set to $b_{1}(k)$.
 If $N(k)=0$, then  the plant input depends on the variable $\beta(k)$. If
$\beta(k)\in\{0,1\}$ (i.e., $x(k)$  does not lie inside the
desired region), then  $u(k)$ is taken as the
first $p$ elements of the shifted state $b(k)=Sb(k-1)$. If, on the other hand,
$\beta(k)=2$ indicating that  $x(k)\in\mathbb{B}_d$, then the buffer is emptied and the plant input is
set to zero, see Fig.~\ref{fig:triggering}.

\par Algorithm A$_1$    amounts to a dynamic state
feedback policy with internal state variable $b(k)$ which provides the
plant input
%the
%control input at the present time step, namely,
%\begin{equation*}
 $ u(k)$
%\end{equation*}
 and suggested plant
inputs at future time steps. If new state information is received and more processor time is
available, a longer trajectory of  control inputs is calculated and
stored in the buffer. If the buffer runs out of tentative plant inputs, then
actuator values are set to zero.   The algorithm does not require prior knowledge of future
 processor  availability and hence can be employed  in shared systems where the controller
task can be preempted by other computational tasks at the processor.

\section{Stochastic Stability - Preliminaries}
\label{sec:analysis}

%The control loop which results from combining the plant with the event-based sampler and the anytime controller is affected by random packet dropouts and processor availability. At times, the plant unavoidably evolves in open-loop.  Consequently, if~(\ref{eq:process}) is open-loop unstable, then one cannot guarantee \emph{a priori} that the closed loop will be stable. %In the sequel we will study this issue from a stochastic stability viewpoint. Before deriving our main results on stochastic stability of the resulting system in Section~\ref{sec:algorithm-a_1}, we first give some preliminary analysis and assumptions.

%\subsection{Definitions and Assumptions}
%\label{sec:defin-assumpt}
 %In terms of the notation introduced above and in~(\ref{eq:21}),  we have:
%\begin{equation*}
%  b(k)=
%  \begin{cases}
%    \begin{bmatrix}
%      \vec{u}(k)\\
%      \mathbf{0}_{(\Lambda -N(k)) p}
%    \end{bmatrix}
%%    + M_{N(k)} b(k-1),
%    &\text{if $N(k)\geq 1$,}\\
%Sb(k-1) &\text{if $N(k)=0$ and $\beta(k)\in\{0,1\}$,}\\
%    \mathbf{0}_{\Lambda p}&\text{if $\beta(k)=2$.}
%  \end{cases}
%\end{equation*}
%where
%\begin{eqnarray}
%  \label{eq:44}
%  M_i\eq \big(I_{\Lambda  p}- D_i \big) S,
%\end{eqnarray}
%with
%\begin{eqnarray}
%\label{eq:45}
%    D_i \eq
%    \begin{cases}
%     % 0_{\Lambda p}, &\text{if $i =0$}\\
%      \diag ( I_{i p}, 0_{(\Lambda-i) p}), &\text{if $i\in
%        \{1,2,\dots,\Lambda-1\}$}\\
%       I_{\Lambda p}, &\text{if $i = \Lambda$.}
%     \end{cases}
%   \end{eqnarray}
 For our subsequent analysis, % to studying the length of the tentative control sequences provided by the algorithms, namely $\{N\}_{\N_0}$,
 it is convenient to investigate how many
values in the  state $b(k)$ stem from evaluating $\kappa$,
$\ell \in\N_0$. As in\cite{quegup13a,quegup11a}, we will refer to this value as the \emph{effective buffer
  length} (at time $k$), and denote it as
%\begin{equation*}
 $ \lambda (k) \in\{0,1,\dots,\Lambda\},  k\in\N_0$
%\end{equation*}
with $\lambda(-1)=0$. It is easy to see that for all $k\in \N_0$
we have
\begin{equation*}
  \lambda(k) =
  \begin{cases}
  N(k)&\text{if $N(k)\geq 1$,}\\
 \max\{0,\lambda(k-1)-1\},&\text{if $N(k)=0$ and $\beta(k)\in\{0,1\}$,}\\
  0& \text{if $\beta(k)=2$.}
  \end{cases}
\end{equation*}
To investigate  stability, we make the following assumptions:
\begin{assumption}[Processor availability]
\label{ass:iid}
The sampling time of the plant~(\ref{eq:process}) is such that processor
availability for control at different time-instants is  independent and
  identically distributed (i.i.d.).  Thus, the process $\{N\}_{\N_0}$ has conditional probability distribution
%  \begin{eqnarray}
 %   \label{eq:7}
$  p_{j}\eq  \Prob \{N(k)=j\,|\,\beta(k)=1\} ,$
%  \end{eqnarray}
  where $p_j\in[0,1)$ are given and with $j\in\{0,1,2,\dots, \Lambda\}$. For other realizations of $\beta(k)$, no
   plant inputs are calculated, thus,
 $     \Prob \{N(k)=0\,|\,\beta(k)\in\{0,2\}\} = 1$.
\hfs
\end{assumption}
%Similarly, we will assume  that  packet dropouts are i.i.d. at the time-scale of the plant model. %, cf.,\cite{queahl12b,huadey07,youxie11}.:
\begin{assumption}[Erasure channel]
  \label{ass:dropouts}
  The binary transmission success  process $\{\gamma\}_{\N_0}$ has conditional
  probabilities
   $\Prob\{\gamma(k)=1\,|\,|x(k)|\geq d\}=q$, $\Prob\{\gamma(k)=0\,|\,|x(k)|<
   d\}=1$.\hfs
\end{assumption}
%
%Assumption~\ref{ass:bound_prob} stated below, bounds the rate of increase
%of the Lyapunov function $V$ in~(\ref{eq:3}), when the
%  nominal system~\eqref{eq:15} is run in open-loop. It also imposes a (mild) restriction on
%  the distribution of the  initial plant state $x(0)$.
\begin{assumption}[Open-loop bound]
  \label{ass:bound_prob}
 There exists $\alpha\geq \rho$ %$1\leq\alpha<{1}/{p_{0}}$
such that
  \begin{eqnarray}
    \label{eq:20}
    V({f}(\chi,\mathbf{0}_p))\leq\alpha V(\chi),\quad\forall \chi
    \in\R^n.
  \end{eqnarray}
where  $\rho, V$ and $ \varphi_2$ are as in~(\ref{eq:3}). Further,
$ \E\big\{\varphi_2(|x(0)|)\big\}<\infty$.\hfs
\end{assumption}
It is worth noting that,  by allowing for $\alpha >1$,  Assumption~\ref{ass:bound_prob} does not require that
  the open-loop system $x(k+1) = {f}(x(k),\mathbf{0}_p)$  be
  asymptotically stable. %, cf., \cite{chapal11}.
  Further, note that
  Assumptions~\ref{ass:CLF} and~\ref{ass:bound_prob} are stated in terms of the
  same
   function $V$, see  also\cite[Section IV-A]{quegup13a}.

\par To go beyond stability and investigate   stationarity, it is convenient  to
impose the following assumptions on the control policy  $\kappa$
\begin{assumption}[Continuity of $\kappa$]
\label{ass:Continuity}
The control law $\kappa$ in~(\ref{eq:3}) is such that $\kappa(x)=\mathbf{0}_n$ for all $x \in
\mathbb{B}_d$ and $\kappa$ is continuous on $\mathbb{R}^n$. \hfs
\end{assumption}

%However, there is not much loss of generality in doing so. In fact, suppose that there exists some $W\colon \R^n\to \R_{\geq 0}$, which satisfies $    W({f}(x,\mathbf{0}_p,\mathbf{0}_m))\leq\bar\alpha W(x)$, for some $\bar
  %   \alpha \in\R_{\geq 0}$ and for all $x \in\R^n.
%$ If $W$  is linearly comparable with $V$, i.e., there exist
%$\gamma_1,\gamma_2\in\R_{> 0}$, such that
%$
%  \gamma_1 V(x)\leq W(x)\leq \gamma_2V(x)$, $\forall x \in\R^n,
%$
%then it is easy to see that~(\ref{eq:20}) holds with $\alpha=\bar{\alpha}
%\gamma_2/\gamma_1 \in\R_{\geq 0}$.

%Note that, in case there is noise, since $g$ is bounded, %by an application of
%the dominated convergence theorem, we have
%\[\lim_{x_n \to x} \int g(f(x_n,\kappa(x_n),w)) P(dw) = \int \lim_{x_n \to x} g(f(x_n,\kappa(x_n),w)) P(dw).\]
%Thus,
%a sufficient condition for Assumption \ref{ass:Continuity} to hold is that
%$f(x,\kappa(x),w)$ are continuous for every $w\in\R^m$. In the noise-free case, this naturally needs to be checked only for $w=\mathbf{0}_m$.

\section{Stability with the Baseline Algorithm}
\label{sec:stab-with-basel}
%We will next derive  sufficient conditions under which the baseline algorithm~(\ref{eq:4})  achieves a specific form of stochastic stability of the closed loop {system}. %Recall that $p_0$ is the probability that the controller is unable to calculate any control input, despite the measurement $x(k)$ being available.
 If the baseline
algorithm is used and Assumption~\ref{ass:iid} holds, then % (in the
% disturbance-free case) the closed loop is characterized by:
% \begin{eqnarray}
%   \label{eq:10}
%   x(k+1) =
%   \begin{cases}
%     f(x(k),\kappa(x(k)),0) &\text{with probability $p_0$}\\
%     f(x(k),0,0) &\text{with probability $1-p_0$}.
%   \end{cases}
% \end{eqnarray}
% or equivalently (choose one)
\begin{eqnarray}
  \label{eq:10}
  x({k+1}) =
  \begin{cases}
   f(x(k),\kappa(x(k))), &\text{if $N(k)\geq 1$,}\\
   f(x(k),\mathbf{0}_p),&\text{if $N(k)=0$}.
  \end{cases}
\end{eqnarray}
The following result establishes  conditions on system parameters which ensure
that the closed loop~(\ref{eq:10}) is stable in a stochastic sense.

\begin{thm}[Stability with  baseline algorithm]
\label{thm:baseline}
Consider~(\ref{eq:10}) and define
 $ D\eq \varphi_2(d)$.
%\end{eqnarray}
 Suppose that Assumptions~\ref{ass:CLF} to~\ref{ass:bound_prob} hold and that
  \begin{eqnarray}
    \label{eq:baseline_stability}
    \Gamma \eq (1-q)\alpha +q \big(p_{0}\alpha+(1-p_{0})\rho\big) < 1,
  \end{eqnarray}
where $\rho\in[0,1)$ is the closed-loop bound in~(\ref{eq:3}), $\alpha$
is the  bound in~(\ref{eq:20}),
 $q$ is the transmission
success probability, and $p_0$ is the probability of the processor not being
available for control.
Then for all $x\in\N_0$,
\begin{equation*}
%  \begin{split}
    \E\big\{\varphi_1(|x(k)|)\big\}%&= \int_{\chi \in\R^n} \E\big\{\varphi_1(|x(k)|)\,|\,x(0)=\chi\big\} \Prob
 %   \{x(0)=\chi\}\\
    %&
 \leq% \Gamma^k \int_{\chi \in\R^n}  \varphi_2(\chi) \Prob \{x(0)=\chi\} +
%     \frac{q (1-p_{0}) (\alpha-\rho)D}{1-\Gamma}\int_{\chi \in\R^n}\Prob
%     \{x(0)=\chi\}\\
%     &=
    \Gamma^k \E\big\{\varphi_2(x(0)) \big\} + \frac{q (1-p_{0})
      (\alpha-\rho)D}{1-\Gamma}<\infty.
%  \end{split}
\end{equation*}
\end{thm}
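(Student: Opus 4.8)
The plan is to treat $V(x(k))$ as a stochastic Lyapunov function and to establish a one-step affine drift inequality of the form $\E\{V(x(k+1))\,|\,x(k)\}\le \Gamma V(x(k)) + c$ for a suitable constant $c\ge 0$, from which the claimed geometric bound follows by iteration. The starting observation is that, by Assumption~\ref{ass:CLF}, $V$ sandwiches the state through $\varphi_1(|x|)\le V(x)\le \varphi_2(|x|)$, so it suffices to control $\E\{V(x(k))\}$ and then read off the bound on $\E\{\varphi_1(|x(k)|)\}$ at the end.

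The core step is a case analysis of $\E\{V(x(k+1))\,|\,x(k)\}$ based on the value of $\beta(k)$ and the number $N(k)$ of control calculations. If $|x(k)|\ge d$ the sensor transmits; combining the erasure model of Assumption~\ref{ass:dropouts} (success with probability $q$) with the processor model of Assumption~\ref{ass:iid} (no input calculated with probability $p_0$ given $\beta(k)=1$), the plant input equals $\kappa(x(k))$ exactly when the packet arrives and the processor is available, i.e.\ with probability $q(1-p_0)$, in which case $V(f(x(k),\kappa(x(k))))\le \rho V(x(k))$ by~(\ref{eq:3}) since $x(k)\notin\mathbb{B}_d$; otherwise the input is $\mathbf{0}_p$ and $V(f(x(k),\mathbf{0}_p))\le \alpha V(x(k))$ by~(\ref{eq:20}). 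Averaging these outcomes yields precisely $\E\{V(x(k+1))\,|\,x(k)\}\le \Gamma V(x(k))$ on the event $\{|x(k)|\ge d\}$, with $\Gamma$ as in~(\ref{eq:baseline_stability}).

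The remaining, and main, difficulty is the region $|x(k)|<d$, where $\beta(k)=2$ forces $N(k)=0$ and hence $u(k)=\mathbf{0}_p$, so only the weaker multiplier $\alpha$ is available and the pure geometric bound $\Gamma V(x(k))$ would fail. The resolution is to exploit that on this set $V(x(k))\le \varphi_2(|x(k)|)<\varphi_2(d)=D$, so the excess can be absorbed into a constant: writing $\alpha V(x(k))=\Gamma V(x(k))+(\alpha-\Gamma)V(x(k))$ and using the algebraic identity $\alpha-\Gamma=q(1-p_0)(\alpha-\rho)\ge 0$ together with $V(x(k))<D$ gives $\E\{V(x(k+1))\,|\,x(k)\}\le \Gamma V(x(k))+q(1-p_0)(\alpha-\rho)D$. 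Since this correction term is nonnegative, the same inequality trivially also holds on $\{|x(k)|\ge d\}$, so it is valid for every value of $x(k)$.

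Finally I would take unconditional expectations via the tower property to obtain the scalar recursion $\E\{V(x(k+1))\}\le \Gamma\,\E\{V(x(k))\}+q(1-p_0)(\alpha-\rho)D$. Because $\Gamma<1$ by hypothesis, iterating and summing the resulting geometric series gives $\E\{V(x(k))\}\le \Gamma^{k}\,\E\{V(x(0))\}+\frac{q(1-p_0)(\alpha-\rho)D}{1-\Gamma}$. Bounding $\E\{V(x(0))\}\le \E\{\varphi_2(|x(0)|)\}$, which is finite by Assumption~\ref{ass:bound_prob}, and $\E\{\varphi_1(|x(k)|)\}\le \E\{V(x(k))\}$ through the sandwich in~(\ref{eq:3}), then produces the stated bound, finite for every $k$. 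The one point requiring care throughout is that, given the conditioning events, the randomness of $\gamma(k)$ and $N(k)$ is independent of the precise value of $x(k)$, which is exactly what Assumptions~\ref{ass:iid} and~\ref{ass:dropouts} provide.
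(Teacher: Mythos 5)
Your proposal is correct and follows essentially the same route as the paper's proof: the same case split on whether $x(k)\in\mathbb{B}_d$, the same averaging over erasure and processor availability giving the multiplier $\Gamma$ outside $\mathbb{B}_d$, and the same absorption of the excess inside $\mathbb{B}_d$ via the identity $\alpha-\Gamma=q(1-p_0)(\alpha-\rho)$ together with $V(\chi)<D$, yielding the identical one-step drift inequality. The only cosmetic difference is that you iterate the resulting scalar recursion directly and sum the geometric series, whereas the paper invokes Proposition 3.2 of \cite{meyn89} to pass from the drift condition to the geometric bound.
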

\begin{proof}
  Note that, for i.i.d.\ processor and channel availabilities  $\{x\}_{\N_0}$ in~(\ref{eq:10})
  is Markovian. This can be verified by
  noting that conditioning on $x(k)$ makes the event
  outcome $\beta(k)$ depend on $\gamma(k)$ only. To analyze stochastic
  stability  using   Lyapunov functions (see, e.g.,\cite{meyn89}),
we use the law of total
  expectation to write
  \begin{gather}
  \notag \E\big\{V(x(1))\,\big|\,x(0)=\chi\big\}
      =\sum_{j=0}^2\E\big\{V(x(1))\,\big|\,x(0)=\chi,\beta(0)=j\big\} 
      \label{eq:6} 
      \Prob\{\beta(0)=j\,|\,x(0)=\chi\}.
    \end{gather}
    If we now use (\ref{eq:13}),~(\ref{eq:3}),~(\ref{eq:20}) and the definition of $\mathbb{B}_{d}$, then:
\begin{equation}
  \label{eq:8}
  \begin{split}
    \E\big\{V(x(1))\,\big|\,x(0)=\chi,\beta(0)=0\big\}&\leq \alpha V(\chi)\\
    \E\big\{V(x(1))\,\big|\,x(0)=\chi,\beta(0)=2\big\}&\leq \alpha
    V(\chi)<\alpha  \varphi_2(d),
  \end{split}
\end{equation}
For $\beta(0)=1$,   $x(0)$ is received. Using~(\ref{eq:20}) and~(\ref{eq:10}), we have
\begin{align}
\nonumber    \E&\big\{V(x(1)) \,\big| \, x(0)=\chi,\beta(0)=1 \big\} =\sum_{j\in\N_0}
    \E\big\{V(x(1))\,\big|\,x(0)=\chi,\beta(0)=1,N(0)=j\big\}\\
    & 
    \qquad \times \Prob\{N(0)=j\,|\,x(0)=\chi,\beta(0)=1\} 
%    &= p_0\E\big\{V(x(1))\,\big|\,x(0)=\chi,\beta(0)=1,N(0)=0\big\} + \sum_{j\in\N}
 %   p_j\E\big\{V(x(1))\,\big|\,x(0)=\chi,\beta(0)=1,N(0)=j\big\}\\
   \leq \big(p_{0}\alpha+(1-p_{0})\rho\big)V(\chi).
     \label{eq:14}
\end{align}
Now, if $x(0)\in\mathbb{B}_d$, then
$\beta(0)=2$, thus~(\ref{eq:6}) and~(\ref{eq:8}) provide
\begin{eqnarray}
  \label{eq:17}
  \E\big\{V(x(1)) \,\big| \, x(0)=\chi\in\mathbb{B}_d \big\}\leq \alpha V(\chi).
\end{eqnarray}
Further, since $\alpha
-\Gamma= q(1-p_0)(\alpha-\rho)>0$ (see~(\ref{eq:baseline_stability}))  and
$V(\chi)<  D$ for all $\chi\in\mathbb{B}_d$,  we have  
\begin{equation}
  \label{eq:26}
  (\alpha-\Gamma) V(\chi) <  (\alpha-\Gamma) D\Rightarrow \alpha V(\chi) <
  \Gamma  V(\chi) + (\alpha-\Gamma) D,\quad \forall \chi \in\B_d.
\end{equation}
 On the other hand, if $x(0)\not\in\mathbb{B}_d$, then (in view of Assumption~\ref{ass:dropouts}), $\Prob\{\beta(0)=0\,|\,x(0)\not\in\mathbb{B}_d\}=1-q$,
and $\Prob\{\beta(0)=1\,|\,x(0)\not\in\mathbb{B}_d\}=q$. Thereby, substitution of~(\ref{eq:8})
and~(\ref{eq:14}) into~(\ref{eq:6}) provides:
\begin{eqnarray}
  \label{eq:18}
  \E\big\{V(x(1)) \,\big| \, x(0)=\chi\not\in\mathbb{B}_d \big\}\leq \Gamma V(\chi).
\end{eqnarray}
Expressions~(\ref{eq:17})--(\ref{eq:18}) lead to:
\begin{equation*}
     \E\big\{V(x(1))\,\big|\,x(0)=\chi\big\}<
    \Gamma V(\chi)+ ( \alpha - \Gamma )D=  \Gamma V(\chi)+q (1-p_{0}) (\alpha-\rho)D.
\end{equation*}
Consequently, Proposition 3.2 of \cite{meyn89}, and~(\ref{eq:3}) give
\begin{equation*}
  \E\big\{\varphi_1(|x(k)|)\,|\,x(0)=\chi\big\} \leq   \Gamma^k V(\chi) +
  \frac{q (1-p_{0}) (\alpha-\rho)D}{1-\Gamma} ,
\end{equation*}
 for all $k\in
  \N_0$. Using the law of total expectation and~(\ref{eq:3}) yields the first
inequality. The second follows from Assumption~\ref{ass:bound_prob}.
\end{proof}

It is worth noting that % the stability condition~(\ref{eq:baseline_stability}) involves $1-q$, which is the loss
% probability of the
% communication channel, the probability of the controller not being able to
% calculate any values ($p_0$), and the   bounds $\alpha$ and
% $\rho$. W
whilst the condition~(\ref{eq:baseline_stability}) is independent of the size
of   $\mathbb{B}_{d}$, the ultimate bound is
increasing in $d$. We can also consider two special cases. If $d=0$ and  $q=1$, so that the sensor transmits at every instant $k\in\N_0$ and the
communication channel
does not introduce any dropouts, (\ref{eq:baseline_stability}) reduces to
$p_{0}\alpha+(1-p_{0})\rho<1$, thus recovering our earlier
result\cite[Thm.1]{quegup13a}.
If the processor is available at
every time-step (i.e.,  $p_0=0$), then the situation amounts to
event-based control for non-linear systems using an erasure channel. In this
case, the sufficient condition~(\ref{eq:baseline_stability}) becomes
$(1-q)\alpha+\rho q<1$.

%In the following, we will impose a continuity assumption on the control policies so that the constructed Markov chain is weak Feller.
%\begin{assumption}
%\label{ass:Continuity}
%The model~(\ref{eq:process}) and control law $\kappa$ in~(\ref{eq:3}) are such that for every continuous and bounded $g$, both
%$\E\{g(f(x,\mathbf{0}_p,w))\}$ and $\E[g(f(x,\kappa(x),w))]$ are continuous functions of
%$x$, where the expectation is over $w$. \hfs
%\end{assumption}
%
%Note that, in case there is noise, since $g$ is bounded, by an application of
%the dominated convergence theorem, we have
%\[\lim_{x_n \to x} \int g(f(x_n,\kappa(x_n),w)) P(dw) = \int \lim_{x_n \to x} g(f(x_n,\kappa(x_n),w)) P(dw).\]
%Thus, a sufficient condition for Assumption \ref{ass:Continuity} to hold is that
%$f(x,\kappa(x),w)$ are continuous for every $w\in\R^m$. In the noise-free case, this naturally needs to be checked only for $w=\mathbf{0}_m$.

%\subsection{Baseline algorithm}
%\begin{eqnarray}
%  \label{eq:9}
%  \E\big\{\varphi_1(|x(k)|)\big\} \leq  \beta \Gamma^k  +
%  \mu ,\quad \forall k\in
%  \N_0.
%\end{eqnarray}

\begin{thm}[Stationarity with  baseline algorithm]\label{StocBaseLine}
Consider~(\ref{eq:10}), suppose that
  Assumptions~\ref{ass:CLF} to~\ref{ass:Continuity} hold and
  that~(\ref{eq:baseline_stability}) holds. Then, there exists an
  invariant probability measure for $\{x\}_{\N_0}$. Furthermore, under every such
  invariant probability measure  $\pi$, $$\E_{\pi}\{\varphi_1(|x|) \} \leq
q (1-p_{0})
      (\alpha-\rho) \varphi_2(d)/(1-\Gamma).$$ %\hfs
\end{thm}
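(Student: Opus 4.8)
The plan is to view $\{x\}_{\N_0}$ under the baseline recursion~(\ref{eq:10}), with i.i.d.\ processor and channel availabilities, as a time-homogeneous Markov chain on $\R^n$ whose one-step kernel $P(\chi,\cdot)$ is read off from the conditioning already performed in the proof of Theorem~\ref{thm:baseline}: for $|\chi|<d$ the state jumps deterministically to $f(\chi,\mathbf{0}_p)$, whereas for $|\chi|\geq d$,
\begin{equation*}
  P(\chi,\cdot)=\big((1-q)+qp_0\big)\,\delta_{f(\chi,\mathbf{0}_p)}+q(1-p_0)\,\delta_{f(\chi,\kappa(\chi))}.
\end{equation*}
Existence of an invariant probability measure will then be obtained by the Krylov--Bogolyubov method, which requires two ingredients: that $P$ be weak Feller, and that the Cesàro-averaged occupation measures $\mu_n\eq\tfrac1n\sum_{k=0}^{n-1}\mathrm{Law}(x(k))$ be tight.

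For the Feller property I would verify that $\chi\mapsto(Pg)(\chi)=\E\{g(x(1))\,|\,x(0)=\chi\}$ is continuous for every bounded continuous $g$. On the open regions $\{|\chi|<d\}$ and $\{|\chi|>d\}$ continuity is immediate from continuity of $f$ and of $\kappa$. The delicate point, and precisely the reason Assumption~\ref{ass:Continuity} is imposed, is matching the two branches across the triggering boundary $\{|\chi|=d\}$: because $\kappa$ is continuous and vanishes on $\B_d$, one has $\kappa(\chi)=\mathbf{0}_p$ whenever $|\chi|=d$, so $f(\chi,\kappa(\chi))=f(\chi,\mathbf{0}_p)$ there and the outer kernel collapses to $\delta_{f(\chi,\mathbf{0}_p)}$, agreeing with the inner kernel. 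I expect this boundary-matching step to be the main obstacle, since it is where the switching induced by the event-triggering rule meets the discontinuity of the indicator $\mathbf{1}_{\B_d}$; without the continuity assumption the kernel would jump across $\{|\chi|=d\}$ and $P$ need not be Feller.

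Tightness follows directly from the drift estimate already established. Iterating the one-step bound $\E\{V(x(k{+}1))\,|\,x(k)\}\leq\Gamma V(x(k))+(\alpha-\Gamma)D$ and invoking $\E\{\varphi_2(|x(0)|)\}<\infty$ from Assumption~\ref{ass:bound_prob} yields $\sup_k\E\{V(x(k))\}<\infty$; since $V(x)\geq\varphi_1(|x|)$ with $\varphi_1\in\mathscr{K}_\infty$ unbounded, the sublevel sets of $V$ are bounded, so Markov's inequality bounds $\mu_n(\{|x|>R\})$ uniformly in $n$. Prokhorov's theorem then extracts a weakly convergent subsequence of $\{\mu_n\}$, and the weak Feller property forces its limit $\pi$ to be invariant, establishing existence.

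For the quantitative bound under an \emph{arbitrary} invariant $\pi$, I would integrate the refined drift inequality. Separating the two cases of the proof of Theorem~\ref{thm:baseline} and using $V(\chi)<D$ on $\B_d$ gives the pointwise estimate $(PV)(\chi)\leq\Gamma V(\chi)+(\alpha-\Gamma)D\,\mathbf{1}_{\B_d}(\chi)$ for all $\chi\in\R^n$. Taking $V_M\eq V\wedge M$, applying invariance $\int V_M\,d\pi=\int PV_M\,d\pi$ to this bounded function, and splitting on $\{V\leq M\}$ (where $PV_M\leq PV\leq\Gamma V_M+(\alpha-\Gamma)D\,\mathbf{1}_{\B_d}$) versus $\{V>M\}$ (where $PV_M\leq V_M$), the contribution of $\{V>M\}$ cancels and one is left with
\begin{equation*}
  (1-\Gamma)\int_{\{V\leq M\}}V\,d\pi\;\leq\;(\alpha-\Gamma)D\,\pi(\B_d)\;\leq\;(\alpha-\Gamma)D=q(1-p_0)(\alpha-\rho)\varphi_2(d).
\end{equation*}
Since $V$ is real-valued, letting $M\to\infty$ and using monotone convergence both establishes finiteness of $\int V\,d\pi$ and preserves the bound, and $\varphi_1(|x|)\leq V(x)$ then yields the claimed estimate on $\E_\pi\{\varphi_1(|x|)\}$; the underlying Lyapunov machinery is that of~\cite{meyn89}. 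The only technical care needed is this truncation-and-limit step, which replaces the naive cancellation of a possibly infinite $\int V\,d\pi$.
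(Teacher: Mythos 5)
Your proposal is correct, and it splits naturally into two halves that compare differently with the paper. For \emph{existence}, you follow essentially the same route as the paper: uniform moment bounds from Theorem~\ref{thm:baseline}, Markov's inequality to get tightness of the Ces\`aro occupation measures, and the weak Feller property (Krylov--Bogolyubov) to conclude that a subsequential weak limit is invariant. Your treatment of the Feller property is actually more explicit than the paper's one-line assertion: writing the kernel as $\bigl((1-q)+qp_0\bigr)\delta_{f(\chi,\mathbf{0}_p)}+q(1-p_0)\delta_{f(\chi,\kappa(\chi))}$ outside $\B_d$ and checking that Assumption~\ref{ass:Continuity} forces the two branches to agree on $\{|\chi|=d\}$ (where $\kappa$ vanishes by continuity) is precisely why continuity of $\kappa$ with $\kappa\equiv\mathbf{0}$ on $\B_d$ is assumed; both you and the paper implicitly also use continuity of $f$. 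For the \emph{moment bound}, however, your route is genuinely different and buys something. The paper bounds $\langle v_T,\varphi_1\rangle$ by the constant $M_{t_0}$ from Theorem~\ref{thm:baseline}, passes to the limit, and lets $t_0\to\infty$; strictly speaking this yields the bound only for invariant measures obtained as limits of occupation measures started from an initial law with finite $\varphi_2$-moment, whereas the theorem claims the bound under \emph{every} invariant measure (one cannot simply start the chain at an arbitrary invariant $\pi$, since $\E_\pi\{\varphi_2(|x|)\}<\infty$ is not known a priori). Your argument -- integrating the pointwise drift $(PV)(\chi)\leq\Gamma V(\chi)+(\alpha-\Gamma)D\,\mathbf{1}_{\B_d}(\chi)$ against an arbitrary invariant $\pi$ after truncating $V_M=V\wedge M$, exploiting $PV_M\leq V_M$ on $\{V>M\}$ so the unbounded part cancels, then letting $M\to\infty$ by monotone convergence -- applies to any invariant measure directly and thus proves the stated claim in full generality; it also correctly identifies $\alpha-\Gamma=q(1-p_0)(\alpha-\rho)$ so the constant matches. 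In short: same existence mechanism, but a cleaner and strictly stronger derivation of the stationary moment bound.
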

\begin{proof}
   Let ${\cal P}(\mathbb{R}^n)$ denote the set of probability measures on $\mathbb{R}^n$ and define for every Borel $B$, $v_T(B) = (1/T)\E\{\sum_{k=0}^{T-1} 1_{\{x(k) \in B\}}\},$
such that $v_T \in {\cal P}(\mathbb{R}^n)$ forms an expected empirical occupation measure sequence. We then have,
$$\langle v_T, \varphi_1 \rangle \eq \int v_T(dx) \varphi_1(|x|) =\frac{1}{T}\E\bigg\{\sum_{k=0}^{T-1} \varphi_1(|x(k)|) \bigg\}.$$
Let $t_0 \in \mathbb{N}$. By Theorem~\ref{thm:baseline}, we have that $\E\{\varphi_1(|x(k)|)\}$ and the subsequence
$\{\langle v_T, \varphi_1 \rangle, T \geq t_0\}$ are uniformly bounded by some $M_{t_0} < \infty$.
Define $N_r := \{x: \varphi_1(|x|) \leq r\}$. Since $\varphi_1$ is monotone and
unbounded, by an application of Markov's inequality, we have
\begin{eqnarray}\label{uniformBoundTight}
M_{t_0} \geq \int v_T(dx) \varphi_1(|x|) \nonumber %= \int_{N_r} v_T(dx) \varphi_1(|x|) + \int_{\mathbb{X} \setminus N_r} v_T(dx) \varphi_1(|x|) \nonumber
\geq \int_{\mathbb{X} \setminus N_r}\! v_T(dx) \varphi_1(|x|) \geq r v_T(\mathbb{R}^n \!\setminus\! N_r).
\end{eqnarray}
Thus,
$v_T(N_r) \geq 1 - {M_{t_0}/r},$
and hence for every $\epsilon= {M_{t_0} / r} > 0$, there exists a compact set
$N_{{M_{t_0} / \epsilon}}=\{x: \varphi_1(|x|) \leq {M_{t_0} / \epsilon}\}$ such that $v_t(N_{{M_{t_0} / \epsilon}}) \geq 1 -
\epsilon$. The sequence $\{v_t, t \geq t_0\}$ is, hence, a tight sequence with a converging
subsequence $v_{t_k}$ converging to some $v^* \in {\cal P}(\mathbb{R}^n)$. By~(\ref{eq:10}), if $x(t) \in \mathbb{B}_d$ the control action
is zero and outside $\mathbb{B}_d$, either zero control is applied or
$\kappa(x(t))$ is applied. Since $\kappa$ is continuous and is zero inside
$\mathbb{B}_d$ (see Assumption~\ref{ass:Continuity}), the
Markov chain is weak Feller.\footnote{A Markov chain $\{x(k)\}_{k\in\N_0}$ is
  (weak) Feller if  $\E\{h(x(k+1))|x(k)=\chi\}$ is continuous in $\chi$, for every continuous and bounded function $h$.} Consequently, it can be shown that every limit of
such a subsequence is invariant (see, e.g.,   \cite[Ch.\ 12]{meytwe09}) and satisfies $\langle v_T, \varphi_1 \rangle \leq M_{t_0}$. By Theorem~\ref{thm:baseline}, by increasing $t_0$, $M_{t_0}$ can be taken to be arbitrarily close to $q (1-p_{0}) (\alpha-\rho) \varphi_2(d)/(1-\Gamma)$.
\end{proof}

\section{Stability with the Anytime Algorithm}
\label{sec:algorithm-a_1}
The analysis of the event-based anytime algorithm is more involved
than that of the baseline system~(\ref{eq:10}). First, due to buffering,
$\{x\}_{\N_0}$ will in general not be a Markov process. Further, the  distribution of $\{\beta\}_{\N_0}$ is difficult to derive for
general plant models. This makes  the  approaches of \cite{quegup11a,quegup13a} insufficient to treat the present
case.
%As for the baseline algorithm, our subsequent stability analysis mostly considers the unperturbed system~\eqref{eq:15}, i.e., where $w(k)=\mathbf{0}_m$, for all $k\in\N_0$. In Section~\ref{sec:stat-exist-an}, we will incorporate the disturbance process to study uniqueness of invariant measures of variables of interest.

\par  For ease of exposition,  we assume that the initial  effective
buffer length, $\lambda(0)=0$, and denote  the time steps where  $\lambda(k)=0$ via
$\mathcal{K}=\{k_i\}_{i\in\N_0}$, where $k_{0}=0$ and
%\begin{eqnarray}
 % \label{eq:21a}
 $ k_{i+1} = \inf \big\{ k\in\N \colon k>k_i,\;  \lambda(k)=0\big\}$, $i\in\N_0.$
%\end{eqnarray}
 We also describe the amount of time steps between  consecutive
elements  of $\mathcal{K}$ via the process $\{\Delta_i\}_{i\in\N_0}$, where
$  \Delta_i\eq k_{i+1}-k_i$.
It is easy
to see that
\begin{eqnarray}
  \label{eq:22}
  \beta(k_i+\ell)\in\{0,1\},\quad \forall \ell \in \{1,2,\dots,\Delta_i-1\}, \quad
  \forall i\in\N_0
\end{eqnarray}
whereas $ \beta(k_i )\in\{0,1,2\}$, $\forall i\in\N_0$ and   $x(k^*)\in\mathbb{B}_d
\Rightarrow k^* \in\K$. In contrast to the cases
examined in \cite{quegup11a,quegup13a}, due to the
event-triggering mechanism, $\{\Delta_i\}_{i\in\N_0}$ is, in general, not
i.i.d. In fact, the distribution of $\Delta_i$ depends on $x(k_i)$ and is difficult to
characterize. To study stability of the event-based anytime algorithm, we will develop a state-dependent random-time drift condition.
Our first result,  states that whilst $\{x\}_{\N_0}$
is in general not  Markovian, the   state sequence
\emph{at the time steps
    $k_i\in\K$},    is a  Markov process.
\begin{lem}[Markov property of the sampled process]
\label{lem:Markov}
  Con\-si\-der~(\ref{eq:process})  controlled via  Algorithm A$_1$
  and  suppose that Assumptions~\ref{ass:iid} and~\ref{ass:dropouts} hold. Then
 $\{x\}_{\K}$
is  Markovian.\hfs
\end{lem}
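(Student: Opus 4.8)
The plan is to show that, conditioned on the sampled state $x(k_i)$, the next sample $x(k_{i+1})$ is independent of the earlier samples $x(k_0),\dots,x(k_{i-1})$; as $\{x\}_{\K}$ is $\R^n$-valued, this is precisely the Markov property. First I would collect the exogenous randomness --- the processor availability and the raw channel success --- into an i.i.d.\ sequence $\{w\}_{\N_0}$ that, by Assumptions~\ref{ass:iid} and~\ref{ass:dropouts}, is independent of $x(0)$ and of the plant-state history. The observation driving the argument is that Algorithm~A$_1$ is a deterministic recursion: given the plant state $x(k)$ and the buffer contents, the triggering value $\beta(k)$ is a deterministic function of $x(k)$ and $w(k)$ (through the test $|x(k)|<d$ and the channel outcome), and the updated plant state and buffer, together with $\lambda(k)$, are deterministic functions of the current state, buffer, and $w(k)$. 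Hence the \emph{augmented} process (plant state together with buffer) is a time-homogeneous Markov chain driven by the i.i.d.\ noise $\{w\}_{\N_0}$.

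Second --- and this is the heart of the matter --- I would argue that at the reset instants $k_i\in\K$ the buffer carries no information about the past beyond what is already contained in $x(k_i)$. By construction $\lambda(k_i)=0$, so the effective buffer length vanishes; inspecting the three cases in the definition of $\lambda(k)$ together with the steps of Algorithm~A$_1$, one sees that a buffered entry is applied to the plant only while it is a \emph{live}, $\kappa$-derived value, i.e.\ one counted by $\lambda$. Consequently, once $\lambda$ reaches $0$ the stale contents of the buffer are never read: every subsequent plant input is either zero (the buffer having been exhausted) or is freshly recomputed from $\chi=x(k)$ at an instant where the sensor transmits and the processor is available. I would make this precise by exhibiting a deterministic map $G$ such that the segment $x(k_i),x(k_i+1),\dots$ up to the next reset equals $G\big(x(k_i),\{w(k_i+\ell)\}_{\ell\geq 0}\big)$, with no dependence on the buffer at time $k_i$.

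Third I would close the argument with a discrete strong-Markov-type step for the i.i.d.\ sequence $\{w\}_{\N_0}$. The return time $k_{i+1}=\inf\{k>k_i:\lambda(k)=0\}$ is a stopping time, and only the finitely many noise variables indexed by $k_i\leq k<k_{i+1}$ enter $G$; writing $x(k_{i+1})=G\big(x(k_i),\{w(k_i+\ell)\}_{\ell\geq 0}\big)$, the post-$k_i$ noise block is independent of the noise generated before $k_i$ and hence, conditionally on $x(k_i)$, of $\big(x(k_0),\dots,x(k_{i-1})\big)$. Therefore the conditional law of $x(k_{i+1})$ given $\big(x(k_0),\dots,x(k_i)\big)$ depends on $x(k_i)$ alone, which is the assertion.

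The step I expect to be the main obstacle is the second one: rigorously confirming that an empty effective buffer severs all dependence on the past. This calls for a careful, case-by-case reading of Algorithm~A$_1$ to verify that a buffered value is ever applied only while $\lambda>0$, and that any recomputation overwrites the buffer rather than reading stale entries, so that the reduction of the augmented state to $x(k_i)$ at reset times is exact. The remaining subtlety --- that $k_{i+1}$ is itself random --- is dealt with by the independence-of-increments property of the i.i.d.\ driving noise and is comparatively routine.
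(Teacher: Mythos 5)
Your proposal is correct and follows essentially the same route as the paper: both rest on the observation that at a reset time $k_i\in\K$ the buffer carries no usable information about the past (the paper notes directly that $b(k_i)=\mathbf{0}_{\Lambda p}$, $u(k_i)=\mathbf{0}_p$, $N(k_i)=0$, since entries beyond the effective length $\lambda$ are always zero), so that $x(k_{i+1})$ is determined by $x(k_i)$ together with the i.i.d.\ exogenous sequences $\{N\}$ and $\{\gamma\}$ on the inter-reset interval, whence the Markov property. Your second and third steps (the augmented chain, the deterministic map $G$, and the stopping-time independence argument) simply make explicit what the paper compresses into its closing sentence.
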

\begin{proof}
   The definition of $\K$ gives that $\forall k_i\in\K$ we have
  $  u(k_i) =\mathbf{0}_p,$
  $ b(k_i)=\mathbf{0}_{\Lambda p}$,
  $\lambda(k_i) =  N(k_i)=0$. Thus, the plant state at
  time $k_{i+1}$  depends only on $x(k_i)$ and the sample paths
  $\{N(k_i+1),N(k_i+2),\dots,N(k_{i+1}-1)\}$ and
  $\{\gamma(k_i+1),\gamma(k_i+2),\dots,\gamma(k_{i+1}-1)\}$. The result follows
  since $\{N\}_{\N_0}$ and $\{\gamma\}_{\N_0}$ are i.i.d.
\end{proof}
 The following result provides a sufficient condition for stochastic
stability of the closed loop when the event-based anytime control algorithm  of
Section~\ref{sec:event-driven-anytime} is
used over an erasure channel.
\begin{thm}[Stability with   Algorithm A$_1$]
  \label{thm:anytime}
 Suppose that
  Assumptions~\ref{ass:CLF} to~\ref{ass:bound_prob} hold and define
\begin{eqnarray}
  \label{eq:19}
  \Omega\eq  \alpha\sum_{j\in\N}  \rho^{j-1}
     \Prob\{\Delta_i=j\,|\, \beta(k_{i+1})\not=2\}.
\end{eqnarray}
If Algorithm A$_1$  is used and $\Omega <1$, then
\begin{equation}
  \label{eq:25}
  \begin{split}
    &\max_{k\in\{k_i,k_i+1,\dots,k_{i+1}-1\}}
    \E\big\{\varphi_1(|x(k)|) \big\}  \leq \frac{1+\alpha -\rho}{1-\rho} \Omega^i \E\big\{\varphi_2(x(0)) \big\}
    +\frac{\varphi_{2}(d)}{1-\Omega}<\infty,\quad \forall i\in\N.
  \end{split}
\end{equation}
% where $D$ is as in~(\ref{eq:27}).
\end{thm}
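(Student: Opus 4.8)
The plan is to build on Lemma~\ref{lem:Markov}: because $\{x\}_{\K}$ is Markovian, I would first derive a drift inequality for the sampled chain $(x(k_i))_{i\in\N_0}$, iterate it to a geometric bound on $\E\{V(x(k_i))\}$, and finally lift that bound to all intermediate instants. The structural heart of the argument is a \emph{pathwise} analysis of a single inter-reset interval $[k_i,k_{i+1})$. At $k_i$ the effective buffer length is zero, so, as in the proof of Lemma~\ref{lem:Markov}, $u(k_i)=\mathbf{0}_p$ and $x(k_i+1)=f(x(k_i),\mathbf{0}_p)$; Assumption~\ref{ass:bound_prob} then gives $V(x(k_i+1))\leq\alpha V(x(k_i))$. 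The crucial observation is that at every subsequent step $k_i+\ell$, $\ell\in\{1,\dots,\Delta_i-1\}$, the buffer is non-empty and Algorithm~A$_1$ applies exactly $\kappa(x(k_i+\ell))$: either a fresh evaluation yields $u_1(k_i+\ell)=\kappa(x(k_i+\ell))$, or a shifted buffer entry is used, which coincides with $\kappa(x(k_i+\ell))$ because the realized state equals the closed-loop prediction stored at the last evaluation (a short induction on $\ell$). Since~(\ref{eq:22}) forces $x(k_i+\ell)\notin\B_d$ for these $\ell$, Assumption~\ref{ass:CLF} yields $V(x(k_i+\ell+1))\leq\rho V(x(k_i+\ell))$, and chaining these estimates gives the pathwise contraction
\begin{equation*}
  V(x(k_i+m))\leq\alpha\rho^{\,m-1}V(x(k_i)),\qquad 1\leq m\leq\Delta_i .
\end{equation*}

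Next I would turn the $m=\Delta_i$ instance into a drift for the sampled chain. Conditioning on $x(k_i)$ and splitting on $\beta(k_{i+1})$, the contraction applies on $\{\beta(k_{i+1})\neq2\}$, while on $\{\beta(k_{i+1})=2\}$ one has $x(k_{i+1})\in\B_d$ and hence $V(x(k_{i+1}))<\varphi_2(d)$. This gives
\begin{equation*}
  \E\big\{V(x(k_{i+1}))\,\big|\,x(k_i)\big\}\leq\alpha\,V(x(k_i))\,\E\big\{\rho^{\,\Delta_i-1}\mathbf{1}_{\{\beta(k_{i+1})\neq2\}}\,\big|\,x(k_i)\big\}+\varphi_2(d).
\end{equation*}
The step I expect to be the main obstacle is controlling the first term by a \emph{state-independent} constant: as noted before the theorem, the law of $\Delta_i$ genuinely depends on $x(k_i)$. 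This is exactly what conditioning on $\{\beta(k_{i+1})\neq2\}$ repairs --- on that event~(\ref{eq:22}) forces $\beta\in\{0,1\}$ throughout the interval, so the buffer-depletion time $\Delta_i$ is a functional of the i.i.d.\ processes $\{N\}_{\N_0}$ and $\{\gamma\}_{\N_0}$ alone and decouples from the state. Identifying $\alpha\sum_{j\in\N}\rho^{\,j-1}\Prob\{\Delta_i=j\,|\,\beta(k_{i+1})\neq2\}=\Omega$ then delivers the geometric drift $\E\{V(x(k_{i+1}))\,|\,x(k_i)\}\leq\Omega\,V(x(k_i))+\varphi_2(d)$.

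With $\Omega<1$, iterating this drift from $x(k_0)=x(0)$ gives $\E\{V(x(k_i))\}\leq\Omega^i\,\E\{V(x(0))\}+\varphi_2(d)/(1-\Omega)$. To reach~(\ref{eq:25}) I would finally return to the intermediate instants: taking expectations in the pathwise contraction and bounding the interval maximum by the sum of the per-step factors,
\begin{equation*}
  \max_{0\leq m\leq\Delta_i-1}\E\{V(x(k_i+m))\}\leq\Big(1+\alpha\sum_{m\geq1}\rho^{\,m-1}\Big)\E\{V(x(k_i))\}=\frac{1+\alpha-\rho}{1-\rho}\,\E\{V(x(k_i))\},
\end{equation*}
which is precisely the source of the prefactor $(1+\alpha-\rho)/(1-\rho)$ in~(\ref{eq:25}). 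Substituting the iterated drift bound and sandwiching via $\varphi_1(|x|)\leq V(x)\leq\varphi_2(|x|)$ from~(\ref{eq:3}), together with $\E\{\varphi_2(|x(0)|)\}<\infty$ from Assumption~\ref{ass:bound_prob}, then produces the geometric transient $\propto\Omega^i$ and the ultimate bound $\varphi_2(d)/(1-\Omega)$ of~(\ref{eq:25}); finiteness is immediate since $\Omega<1$.
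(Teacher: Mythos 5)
Your proposal is correct and follows essentially the same route as the paper's proof: the pathwise contraction $V(x(k_i+m))\leq\alpha\rho^{m-1}V(x(k_i))$ (the paper's~(\ref{eq:17a})), the split on $\{\beta(k_{i+1})=2\}$ versus its complement yielding the additive $\varphi_2(d)$ term, the observation that conditioning on $\beta(k_{i+1})\neq 2$ makes the law of $\Delta_i$ state-independent so that the drift coefficient is exactly $\Omega$ in~(\ref{eq:19}), iteration of the resulting drift for the sampled Markov chain (the paper invokes \cite[Prop.\ 3.2]{meyn89} where you iterate directly), and a max-by-sum bound over each inter-reset interval producing the factor $(1+\alpha-\rho)/(1-\rho)$ --- with your explicit induction showing that shifted buffer entries equal $\kappa$ at the realized state (prediction equals realization for the noiseless plant) being a detail the paper asserts without proof. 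The only discrepancy is bookkeeping: carried out as you describe, the prefactor $(1+\alpha-\rho)/(1-\rho)$ multiplies the \emph{entire} iterated bound, giving an ultimate offset $\frac{1+\alpha-\rho}{1-\rho}\cdot\frac{\varphi_2(d)}{1-\Omega}$ rather than the $\frac{\varphi_{2}(d)}{1-\Omega}$ stated in~(\ref{eq:25}); the paper sidesteps this by asserting~(\ref{driftCriterion2}) with the prefactor only on the transient term, citing the method of \cite[Thm.\ 1]{quegup11a} and calling the bound ``admittedly loose,'' so your version is no weaker in substance, though as written it does not reproduce the precise constant in~(\ref{eq:25}).
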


\begin{proof}
We first note that for all $k_i\in\mathcal{K}$ and $\ell \in
\{1,\dots,\Delta_i-1\}$,
  $u(k_i) = \mathbf{0}_p$ and $u(k_i+\ell) = \kappa(x(k_i+\ell))$.
Therefore, the function $V(x(k_{i+1}))$ can be bounded by
using~(\ref{eq:3}) and~(\ref{eq:20}), leading to
\begin{eqnarray}
  \label{eq:17a}
   \E\{V(x(k_{i+1}))\,|\, x(k_i)=\chi,\Delta_i=j\} \leq
   \alpha \rho^{j-1}V(\chi), \forall \chi \in\R^n.
\end{eqnarray}
To account for   event-based transmission, we consider instances
where the
buffer is emptied triggered by $\beta(k)=2$. At
these
instances,~(\ref{eq:17a})   holds; further, $V(k_{i+1})< D\eq
\varphi_{2}(d)$. Thus, 
\begin{eqnarray}
  \label{eq:5}
  \E\{V(x(k_{i+1}))\,|\, x(k_i)=\chi,\Delta_i=j,\beta(k_{i+1})=2\} <
   D,\quad  \forall j\in\N.
\end{eqnarray}
% TWO OPTIONS:
% \par This gives
% \begin{equation*}
%   \begin{split}
%   \E&\{V(x(k_{i+1}))\,|\, x(k_i)=\chi,\Delta_i=j\} \\
%   &=  \E\{V(x(k_{i+1}))\,|\, x(k_i)=\chi,\Delta_i=j,\beta(k_{i+1}) =2\}
% \Prob\{\beta(k_{i+1}) =2\,|\,x(k_i)=\chi,\Delta_i=j\}\\
% &\;+\E\{V(x(k_{i+1}))\,|\, x(k_i)=\chi,\Delta_i=j,\beta(k_{i+1}) \not=2\}
% \Prob\{\beta(k_{i+1}) \not=2\,|\,x(k_i)=\chi,\Delta_i=j\} \\
% &\leq  D+\E\{V(x(k_{i+1}))\,|\,
% x(k_i)=\chi,\Delta_i=j,\beta(k_{i+1}) \not=2\}\leq D +  \alpha \rho^{j-1}V(\chi)
% \end{split}
% \end{equation*}
% Thus, use of the law of total probabilities provides
% \begin{eqnarray}
%   \label{eq:23}
%   \begin{split}
%   E\{V(x(k_{i+1}))\,|\, x(k_i)=\chi\}&\leq
%   \sum_{j\in\N} \E\{V(x(k_{i+1}))\,|\, x(k_i)=\chi,\Delta_i=j\}
%   \Prob\{\Delta_i=j\,|\, x(k_i)=\chi\}\\
%   &\leq\sum_{j\in\N}\big(D +  \alpha \rho^{j-1}V(\chi)\big)\Prob\{\Delta_i=j\,|\,
%   x(k_i)=\chi\}\\
%   &= D+ \sum_{j\in\N}\alpha \rho^{j-1}V(\chi)\Prob\{\Delta_i=j\,|\,
%   x(k_i)=\chi\}
% \end{split}
% \end{eqnarray}
By  using the law of
total expectation twice, we thus
obtain,
\begin{align}
\nonumber     &\E\{V(x(k_{i+1}))\,|\, x(k_i)=\chi\} = \E\{V(x(k_{i+1}))\,|\,
x(k_i)=\chi,\beta(k_{i+1})=2\}  
     \Prob\{\beta(k_{i+1})=2\,|\,x(k_i)=\chi\}\\
\nonumber     &\;+\E\{V(x(k_{i+1}))\,|\, x(k_i)=\chi,\beta(k_{i+1})\not=2\} 
     \Prob\{\beta(k_{i+1})\not=2\,|\,x(k_i)=\chi\}\\
%     &\leq D  \Prob\{\beta(k_{i+1})=2\,|\,x(k_i)=\chi\}\\
 %    &+\E\{V(x(k_{i+1}))\,|\, x(k_i)=\chi,\beta(k_{i+1})\not=2\}
%     \Prob\{\beta(k_{i+1})\not=2\,|\,x(k_i)=\chi\}\\
\label{eq:21d}     &\leq D + \E\{V(x(k_{i+1}))\,|\,
x(k_i)=\chi,\beta(k_{i+1})\not=2\}\\
&\nonumber 
= D + \sum_{j\in\N} \E\{V(x(k_{i+1}))\,|\,
     x(k_i)=\chi,\beta(k_{i+1})\not=2,\Delta_i=j\}
     \Prob\{\Delta_i=j\,|\, x(k_i)=\chi,\beta(k_{i+1})\not=2\}\\
     % &=D
%      + \sum_{j\in\N} \E\{V(x(k_{i+1}))\,|\, x(k_i)=\chi,\Delta_i=j\}
%      \Prob\{\Delta_i=j\,|\, x(k_i)=\chi,\beta(k_{i+1})\not=2\}\\
     &\nonumber \leq D+\sum_{j\in\N} \alpha \rho^{j-1}V(\chi)
     \Prob\{\Delta_i=j\,|\, x(k_i)=\chi,\beta(k_{i+1})\not=2\} = D+\Omega V(\chi),
     \forall \chi \in\R^n,
\end{align}
with $\Omega$ as in~(\ref{eq:19}) and where, to derive the last equality, we
have used Assumption~\ref{ass:iid}. Since $\{x\}_{\K}$ is Markovian,
\cite[Prop.\ 3.2]{meyn89} yields that $\Omega < 1$ guarantees
%is a sufficient condition for
% exponential boundedness at the
%instants $k_i\in \K$ and
\begin{equation*}
  \E\big\{V(x(k_i))\,|\,x(k_0)=\chi\big\} \leq
  \Omega^i V(\chi) + \frac{D}{1-\Omega} ,\quad \forall i\in
  \N_0.
\end{equation*}
Now, since~(\ref{eq:17a}) holds, by a method similar to the one used in
the proof of\cite[Thm.1]{quegup11a}, we can establish the
 (admittedly loose) bound:
\begin{equation}
\label{driftCriterion2}
\begin{split}
  \E \Bigg\{ \sum_{k=k_i}^{k_{i+1}-1} &V(x(k))\,\bigg|\,x(k_0)=\chi\Bigg\}  \leq
  \frac{1+\alpha -\rho}{1-\rho} \Omega^i V(\chi) +\frac{D}{1-\Omega},\quad
  \forall i\in\N.
\end{split}
\end{equation}
Using the law total
expectation,~(\ref{eq:3}) and Assumption~\ref{ass:bound_prob}
gives~(\ref{eq:25}).
\end{proof}
The above result establishes a sufficient condition for the
system to be stochastically stable. The quantity~(\ref{eq:19}) is stated in
terms of a conditional
distribution of $\Delta_i$, which can be
characterized as follows:
\begin{lem}[Conditional distribution of $\Delta_i$]
\label{lem:delta}
 Suppose that Assum\-ptions~\ref{ass:iid} and \ref{ass:dropouts} hold and that
 Algorithm A$_1$ is used. We then have
\begin{equation}\label{wann}
     \frac{\Prob\{\Delta_i=j\,|\,\beta(k_{i+1})\not = 2  \}}{1-q+p_0q}
     =
     \begin{cases}
       1&\text{if $j=1$,}\\
       \theta^T
       {G}^{j-2}
       e_1&\text{if $j\geq 2$,}
     \end{cases}
\quad \forall (i,j) \in\N_0\times \N,
   \end{equation}
 where
$ \theta^T =
    q\begin{bmatrix}
      p_1&\dots&p_{\Lambda}
    \end{bmatrix}$ and $e_1^T=
      \begin{bmatrix}
     1&0&\dots&0
    \end{bmatrix}$.  In~(\ref{wann}), the   entries of the matrix
 ${G}=[g_{\ell j}]$, $\ell,j\in\{1,2,\dots,\Lambda\}$
 are  $g_{\ell j}=p_jq$, $\forall (\ell,j)\in\{3,4,\dots,\Lambda\}\times\{1,2,\dots,{\ell-2}\}\cup
     \{1,2,\dots,\Lambda\}\times\{\ell, \ell+1,\dots,\Lambda\}$; and
$g_{\ell(\ell-1)}=1-q+(p_0+p_{\ell-1})q$, $\forall \ell\in
\{2,3,\dots,\Lambda\}.$ \hfs
\end{lem}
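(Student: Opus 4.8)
The goal is to characterize the conditional distribution of the inter-renewal time $\Delta_i$, given that the renewal at $k_{i+1}$ is triggered by a dropout or processor-unavailability rather than by the state entering $\mathbb{B}_d$ (i.e.\ $\beta(k_{i+1})\neq 2$). The plan is to track the effective buffer length $\lambda(k)$ as it evolves on the interval $(k_i,k_{i+1})$. By the recursion for $\lambda$ stated in Section~\ref{sec:analysis}, between consecutive elements of $\K$ we have $\lambda(k)\geq 1$, and the renewal at $k_{i+1}$ occurs exactly when $\lambda$ first returns to $0$. Because $\{N\}_{\N_0}$ and $\{\gamma\}_{\N_0}$ are i.i.d.\ (Assumptions~\ref{ass:iid} and~\ref{ass:dropouts}), the sequence $\{\lambda(k_i+\ell)\}_\ell$ is, conditioned on $x(k_i)$, a time-homogeneous Markov chain on $\{0,1,\dots,\Lambda\}$ with an absorbing-like state $0$ that terminates the excursion. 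The matrix ${G}$ should be read as the sub-stochastic transition kernel of this chain restricted to the transient states $\{1,\dots,\Lambda\}$, so that $\theta^T {G}^{j-2} e_1$ is precisely the probability of an excursion of length $j$ ending in the absorbing state.

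\textbf{Key steps.} First I would write out the one-step transition probabilities of $\lambda$. At the start of an excursion ($k=k_i+1$), $\lambda$ jumps from $0$ to some value $N(k_i+1)\in\{1,\dots,\Lambda\}$ only if a fresh measurement is received and processed, i.e.\ $\beta=1$ (probability $q$, by Assumption~\ref{ass:dropouts}) and $N=j'$ (probability $p_{j'}$); this accounts for the initial vector $\theta^T = q[p_1\ \cdots\ p_\Lambda]$. At a generic subsequent step, from effective length $\ell\geq 1$ there are two possibilities: either no new input is computed ($N=0$), in which case $\lambda$ decrements to $\ell-1$ by the shift $S$, or a new measurement is received and processed, resetting $\lambda$ to the fresh value $N=j'$. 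Collecting the probability of the ``decrement'' event gives the subdiagonal entry $g_{\ell(\ell-1)}=1-q+(p_0+p_{\ell-1})q$ --- here $1-q$ is a dropout, $p_0 q$ is reception with zero new inputs, and I would need to justify why the $p_{\ell-1}q$ term also produces a net effective length of $\ell-1$. The ``reset'' events give the off-diagonal entries $g_{\ell j}=p_j q$ for the ranges stated. Next I would verify the two regimes of $j$: for $j=1$ the excursion ends immediately at $k_i+1$ (the buffer never fills), giving conditional probability $1$; for $j\geq 2$ the probability is the product of the initial jump, $j-2$ internal transitions, and a final step into $0$, which is exactly $\theta^T {G}^{j-2} e_1$. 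Finally I would divide by the normalizing constant $1-q+p_0 q$, which is the conditioning event probability $\Prob\{\beta(k_{i+1})\neq 2\}$ folded in, and confirm it is consistent across all $j$.

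\textbf{Main obstacle.} The delicate part is the precise bookkeeping of the subdiagonal entry $g_{\ell(\ell-1)}$ and, relatedly, the role of the index shifts in $G$ (the split into $\{3,\dots,\Lambda\}\times\{1,\dots,\ell-2\}$ versus the upper block). One must carefully reconcile two competing effects at each step: the deterministic decrement coming from the shift matrix $S$ when no new control is computed, and the stochastic overwrite when a fresh sequence of tentative inputs is buffered. In particular, establishing that the $p_{\ell-1}q$ contribution belongs to the decrement term requires arguing that receiving a new measurement and computing exactly $\ell-1$ tentative inputs leaves the effective length at $\ell-1$, matching the shift outcome --- this coincidence of two distinct physical events at the same target state is the subtlety that the matrix encoding hides. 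The other steps (the i.i.d.\ factorization, the geometric-style product for excursion probabilities, and the normalization) are then routine given Lemma~\ref{lem:Markov} and the independence assumptions.
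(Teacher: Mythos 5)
Your proposal follows the same route as the paper's proof: during an excursion between consecutive elements of $\K$, the effective buffer length $\lambda(k)$ is a homogeneous Markov chain on $\{0,1,\dots,\Lambda\}$ (by Assumptions~\ref{ass:iid} and~\ref{ass:dropouts}), $\Delta_i$ is its first return time to $0$, and (\ref{wann}) follows from the one-step transition probabilities via a first-passage decomposition. Your bookkeeping of those transition probabilities is correct, including the point you rightly single out as delicate: the subdiagonal entry $g_{\ell(\ell-1)}=1-q+(p_0+p_{\ell-1})q$ collects three events landing on the same state --- a dropout ($1-q$) and reception without processing ($p_0q$), both of which merely shift the buffer, plus reception with exactly $\ell-1$ fresh tentative inputs ($p_{\ell-1}q$), which overwrites it. All of this matches the paper.

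However, your final assembly contains a genuine error. The constant $1-q+p_0q$ is \emph{not} $\Prob\{\beta(k_{i+1})\neq 2\}$; that probability is determined by whether $x(k_{i+1})$ lies in $\mathbb{B}_d$, hence depends on the plant dynamics and the state path, and cannot equal a constant built from $q$ and $p_0$ alone. The constant is instead the probability of the \emph{terminal} transition of the $\lambda$-chain into $0$: given that a transmission is attempted at $k_{i+1}$ (which is exactly what $\beta(k_{i+1})\neq 2$ guarantees), the chain enters $0$ from $\ell\in\{0,1\}$ precisely when the packet is dropped or no input is computed, i.e., $g_{00}=g_{10}=(1-q)+p_0q$, while $g_{\ell 0}=0$ for $\ell\geq 2$. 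Consequently your product formula for $j\geq 2$ is internally inconsistent: the product of the initial jump ($\theta^T$), the $j-2$ internal transitions ($G^{j-2}$), and the final step into $0$ equals $(1-q+p_0q)\,\theta^T G^{j-2}e_1$, not $\theta^T G^{j-2}e_1$. The lemma then follows simply by dividing out this common terminal factor --- the same factor that appears alone in the $j=1$ case --- and no further ``normalization by the conditioning event'' is needed or valid; performing the division you describe on top of a correctly computed product would instead yield $\theta^T G^{j-2}e_1/(1-q+p_0q)$ and contradict (\ref{wann}). Once this factor is attributed correctly, your derivation coincides with the paper's.
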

\begin{proof}
  We first note that our focus is on the time sequences of the form
$\mathcal{I}_i\eq\{k_{i}+1,\dots, k_{i+1}\}$ where $k_i\in\mathcal{K}$, $i\in\N_0$ and where
$\beta(k)\not = 2$, $\forall k\in \mathcal{I}_i$.
Given Assumptions~\ref{ass:iid} and~\ref{ass:dropouts} and the buffering
mechanism described in Section~\ref{sec:event-driven-anytime}, it follows that
$\{\lambda(k)\}$ during every interval $k\in\mathcal{I}_i$,  $i\in\N_0$, is a
homogeneous Markov Chain.  The process $\Delta_i$ then  amounts to the first
    return times to $0$ of  this finite Markov Chain. To characterize the
    latter, we need to evaluate the
    transition probabilities
    $g_{\ell j}\eq\Prob\{\lambda(k+1)=j\,|\,\lambda(k)=\ell,
    k\in\mathcal{I}_i,k+1\in\mathcal{I}_i\}$. Without loss of generality, we
    will set $k=0$. We begin by considering transitions from $\ell\in \{0,1\}$
    to $0$:
\begin{equation*}
     \begin{split}
     g_{\ell0}&=\Prob\{N(1)=0\,|\beta(1)=0\}\Prob\{\beta(1)=0\,|\,\beta(1)\not
     =2\}\\
     &\quad+\Prob\{N(1)=0\,|\beta(1)=1\}\Prob\{\beta(1)=1\,|\,\beta(1)\not =2\}
=(1-q)+p_{0}q, \quad \forall \ell\in \{0,1\}.
     \end{split}
   \end{equation*}
For $\ell\in \{2,3,\dots,\Lambda\}$, we have $g_{\ell 0}=0$. The
 buffer length diminishes
by one for the scenarios  considered below:
\begin{equation*}
     \begin{split}
     g_{\ell(\ell-1)}% &=\Prob\{\lambda( 1)=i-1\,|\lambda(0)=i,\beta( 1)=0\}\Prob\{\beta( 1)=0\,|\,\beta( 1)\not =2\}\\&+\Prob\{\lambda( 1)=i-1\,|\lambda(0)=i,\beta( 1)=1\}\Prob\{\beta( 1)=1\,|\,\beta( 1)\not =2\}\\
     &=\Prob\{N( 1)=0\,|\beta( 1)=0\}\Prob\{\beta( 1)=0\,|\,\beta( 1)\not =2\}
     +\Prob\{N( 1)=0\,|\beta( 1)=1\}\\
     &\quad \times\Prob\{\beta( 1)=1\,|\,\beta( 1)\not =2\}+\Prob\{N( 1)=\ell-1\,|\beta( 1)=1\}\Prob\{\beta( 1)=1\,|\,\beta(
     1)\not =2\}\\
     &=(1-q)+p_0 q+ p_{\ell-1}q, \quad \forall \ell\in \{2,3,\dots,\Lambda\}.
     \end{split}
   \end{equation*}
The  other   transitions    are related to when
$\lambda(k+1)=N(k+1)$, for
 $(\ell,j)\in\big\{\{3,4,\dots,\Lambda\}\times\{1,2,\dots,{\ell-2}\}\big\} \cup
 \big\{ \{1,2,\dots,\Lambda\}\times\{\ell,\ell+1,\dots,\Lambda\}\big\} \cup \big\{ 0 \times
 \{1,2,\dots,\Lambda\}\big\}$. Here we have:
\begin{equation*}
     \begin{split}
     g_{\ell j}&=\Prob\{\lambda( 1)=j\,|\lambda(0)=\ell,\beta( 1)=0\}\Prob\{\beta(
     1)=0\,|\,\beta( 1)\not =2\}\\
     &\quad+\Prob\{\lambda( 1)=j\,|\lambda(0)=\ell,\beta( 1)=1\}\Prob\{\beta(
     1)=1\,|\,\beta( 1)\not =2\}\\
           &=\Prob\{N( 1)=j\,|\beta( 1)=0\}\Prob\{\beta( 1)=0\,|\,\beta( 1)\not
           =2\}+\Prob\{N( 1)=j\,|\beta( 1)=1\}\\
           &\quad \times\Prob\{\beta( 1)=1\,|\,\beta( 1)\not =2\}=0(1-q)+p_jq=p_jq.
     \end{split}
   \end{equation*}
%   \begin{equation*}
%     \begin{split}
%     &p_{i0}=1-q+qp_0,\quad \forall i\in\N_0^1,\\
%     &p_{i(i-1)}=1-q+qp_0+qp_{i-1},\quad \forall i\in\N_2^\Lambda,\\
%     &p_{ij}=qp_j,\quad \forall (i,j)\in\N_3^\Lambda\times\N_1^{i-2}.\\
%     &p_{ij}=qp_j,\quad \forall (i,j)\in\N_1^\Lambda\times\N_i^\Lambda.\\
%     &p_{0j}=qp_j,\quad \forall j\in \N_1^\Lambda.
%             \end{split}
%   \end{equation*}
The derivation of~(\ref{wann}) now follows as in   \cite[Lemma
 2]{quegup11a}  by setting up a recursion on the first passage time of state
 $\ell\in\{1, \dots,\Lambda\}$ to $0$ and then considering the transitions away from
   $0$.
\end{proof}

% The conditional
%distribution of $\Delta_i$  can be
%characterized as in the following result proved in Appendix~\ref{sec:proof-lemma-refl}.
%\begin{lem}
%\label{lem:delta}
%The conditional distribution of $\{\Delta_i\}$ given
% $|x(k_{i+1})|\geq d $ is given by %the recursion:
%  \begin{eqnarray*}
%    \Prob\{\Delta_i=j\,|\,\beta(k_{i+1})\not =2\} =
%    q\sum_{t=1}^{j-1}p_{t}\left(1-q+qp_{0}\right)^{t}\left(1-\sum_{l=1}^{j-t-1}\Prob\{\Delta_{i}=l
%      \,|\,\beta(k_{i+1})\not =2\} \right),
%  \end{eqnarray*}
%  with the initial condition $ \Prob\{\Delta_i=1\,|\,\beta(k_{i+1})\not =2\} =
%  1-q+qp_{0}.$
%\end{lem}
As a consequence of Lemma~\ref{lem:delta}, $\Omega$ in~(\ref{eq:19}) can be
written as:
\begin{equation*}
  \Omega = \alpha(1-q+p_0q) \big( 1 +\rho \theta^T (I-\rho G)^{-1}e_1 \big),
\end{equation*}
and the  stability condition in
Theorem~\ref{thm:anytime}, $\Omega<1$, becomes
\begin{equation*}
  \begin{bmatrix}
      p_1&\dots&p_{\Lambda}
    \end{bmatrix}
    (I_\Lambda-\rho G)^{-1}e_1<\frac{1-\alpha +\alpha q (1-p_0)}{\alpha \rho q (1-q(1-p_0))},
\end{equation*}
which is independent of the size of   $\mathbb{B}_d$.
%As
%in the case of the baseline algorithm, the bound in~(\ref{eq:25}) is strictly increasing in
%$d$.
\par Sufficient conditions for
stationarity can be  stated as follows:
\begin{thm}[Stationarity with Algorithm A$_1$]
\label{stocAnyTime}
Suppose that
  Assumptions~\ref{ass:CLF} to~\ref{ass:Continuity} hold. If Algorithm A$_1$  is
  used and $\Omega < 1$, then there exists an invariant
  probability measure for $\{x\}_{\K}$ as well as for the
  aggregated Markov process, $\{x_{[k,k-(\Lambda-1)]}\}_{k \in \mathbb{N}}$,
  where  $$x_{[k,k-(\Lambda-1)]} \eq
  \{x(k),x(k-1),\cdots,x(k-\Lambda+1)\}.$$
  Furthermore, under every invariant
  probability measure $\pi$, $\E_{\pi}\{V(x)\} < \varphi_{2}(d)/({1-\Omega}).$\hfs
\end{thm}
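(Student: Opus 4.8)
The plan is to follow the same Krylov--Bogolyubov scheme used for Theorem~\ref{StocBaseLine}: build expected empirical occupation measures, extract tightness from the drift bounds already in hand, verify the weak Feller property, and invoke \cite[Ch.\ 12]{meytwe09} to conclude that any weak limit is invariant. The only structural difference is that $\{x\}_{\N_0}$ is not Markov here, so the argument must be run on two genuinely Markov objects: the sampled chain $\{x\}_{\K}$, which is Markov by Lemma~\ref{lem:Markov}, and the aggregated window chain $\{x_{[k,k-(\Lambda-1)]}\}_{k\in\N}$. For the latter I would first record why it is Markov: within any interval between consecutive reset times the applied input is $u(k)=\kappa(x(k))\,1_{\{\lambda(k)\geq 1\}}$, so that $x(k+1)$ equals either $f(x(k),\kappa(x(k)))$ or $f(x(k),\mathbf{0}_p)$, and the last $\Lambda$ states reveal, transition by transition, which of these two maps acted and hence recover the effective buffer length $\lambda(k)$; together with the i.i.d.\ nature of $\{N\}_{\N_0}$ and $\{\gamma\}_{\N_0}$ this makes the window a Markov state.

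For tightness I would reuse the estimates already established. The drift inequality $\E\{V(x(k_{i+1}))\,|\,x(k_i)=\chi\}\leq \Omega V(\chi)+D$ derived inside the proof of Theorem~\ref{thm:anytime}, together with $\E\{\varphi_2(|x(0)|)\}<\infty$ from Assumption~\ref{ass:bound_prob}, gives a uniform bound $\sup_i\E\{V(x(k_i))\}\leq M<\infty$ for the sampled chain, while (\ref{eq:25}) gives $\sup_k\E\{\varphi_1(|x(k)|)\}<\infty$ on the natural time index. Defining $v_T$ exactly as in Theorem~\ref{StocBaseLine} (averaging along $\K$ for the sampled chain and along $\N$ for the aggregated chain), the sandwich $\varphi_1(|x|)\leq V(x)$ with $\varphi_1\in\mathscr{K}_\infty$ coercive lets Markov's inequality force $v_T(N_r)\geq 1-M/r$ on the compact sublevel sets $N_r=\{x:\varphi_1(|x|)\leq r\}$, so $\{v_T\}$ is tight and admits a weakly convergent subsequence $v_{T_k}\to v^*$ by Prohorov's theorem.

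The step I expect to be the main obstacle is the weak Feller property, because the event-trigger makes the branching discontinuous: the probability of applying $\kappa$ rather than $\mathbf{0}_p$ jumps across $\partial\mathbb{B}_d$ (where $\beta=2$ switches on) and across the set where the buffer empties. The resolution is the one that also rescues Theorem~\ref{StocBaseLine}: by Assumption~\ref{ass:Continuity} the map $\kappa$ is continuous and vanishes on $\mathbb{B}_d$, so precisely at the states where the branching weights are discontinuous the two candidate successors $f(x,\kappa(x))$ and $f(x,\mathbf{0}_p)$ coincide. Hence $\chi\mapsto\E\{h(x(k+1))\,|\,x(k)=\chi\}$, and its sampled-chain analogue $\chi\mapsto\E\{h(x(k_{i+1}))\,|\,x(k_i)=\chi\}$, is continuous for every bounded continuous $h$. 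With the weak Feller property in place, \cite[Ch.\ 12]{meytwe09} shows that $v^*$ is invariant, establishing existence of an invariant measure both for $\{x\}_{\K}$ and for $\{x_{[k,k-(\Lambda-1)]}\}$.

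Finally, for the moment bound I would pass the estimate $\langle v_T,V\rangle\leq \Omega^{i}V(\chi)+D/(1-\Omega)$ (equivalently $\limsup_i\E\{V(x(k_i))\}\leq D/(1-\Omega)$, with $D=\varphi_2(d)$) to the limit using lower semicontinuity of $V$ and $V\geq 0$, so that $\langle v^*,V\rangle\leq\liminf_k\langle v_{T_k},V\rangle\leq D/(1-\Omega)$, yielding $\E_\pi\{V(x)\}\leq\varphi_2(d)/(1-\Omega)$ under every invariant $\pi$. As in Theorem~\ref{StocBaseLine}, letting the averaging start time grow drives the uniform bound arbitrarily close to $\varphi_2(d)/(1-\Omega)$, giving the stated strict inequality.
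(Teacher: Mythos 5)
Your route to existence is genuinely different from the paper's, and that part is essentially workable. The paper does not use Krylov--Bogolyubov occupation measures here; it invokes the random-time, state-dependent drift results of \cite{yukmey13a}: Theorem~2.1 for the sampled chain $\{x\}_{\K}$ (via the drift in~(\ref{bound0}) plus weak Feller), and Theorem~2.2 for the aggregated chain, which obliges the paper to verify conditions your route never touches: a drift toward a compact set $\mathcal{S}$, the bound $\sup_{x(k_i)}\E\{k_{i+1}-k_i\,|\,x(k_i)\}<\infty$ (extracted from~(\ref{driftCriterion2}) and the fact that $V$ is bounded below off $\mathbb{B}_d$), and compactness of the corresponding aggregated set. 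Your substitute---tightness of expected occupation measures from the uniform natural-time bound implied by~(\ref{eq:25}) (with a union bound over the $\Lambda$ coordinates for the window chain), combined with the weak Feller property and \cite[Ch.~12]{meytwe09}---is a legitimate and arguably more elementary path to \emph{existence} for both chains; the weak Feller claims you make are the same ones the paper asserts from Assumption~\ref{ass:Continuity}. One caveat: your justification of the Markov property of the window chain (that the $\Lambda$ most recent states let you ``recover'' $\lambda(k)$) is not correct as stated, since $f(x,\kappa(x))=f(x,\mathbf{0}_p)$ whenever $\kappa(x)=\mathbf{0}_p$ (in particular near $\mathbb{B}_d$), so the two maps are not always distinguishable from the trajectory; the paper argues instead that $u(k)$ is always either $\mathbf{0}_p$ or a function of states inside the window, with the branching driven by randomness independent of the state.

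The genuine gap is in the final claim. The theorem asserts $\E_{\pi}\{V(x)\}<\varphi_2(d)/(1-\Omega)$ under \emph{every} invariant probability measure $\pi$, whereas your argument only bounds $\langle v^*,V\rangle$ for those invariant measures $v^*$ arising as weak limits of occupation measures started from the given initial condition. Uniqueness of the invariant measure is neither claimed nor available under the standing assumptions, so these limits need not exhaust the set of invariant measures, and the quantifier ``every'' is not reached. Moreover, your limit passage $\langle v^*,V\rangle\le\liminf_k\langle v_{T_k},V\rangle$ requires $V$ to be lower semicontinuous, which is not assumed: Assumption~\ref{ass:CLF} only sandwiches $V$ between continuous functions, and~(\ref{eq:25}) controls $\varphi_1(|x|)\le V(x)$ rather than $V$ itself. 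The paper closes both holes at once with a stationarity argument that needs no regularity of $V$ and no convergence of occupation measures: start from an \emph{arbitrary} invariant $\pi$, use the drift~(\ref{bound0}) conditioned on each realization of $x(k_0)$ to bound the Ces\`aro averages of $P^mV$, truncate to $\min(N,V)$, apply Fatou's lemma together with invariance of $\pi$, and then let $N\to\infty$ by monotone convergence to obtain $\E_{\pi}\{V\}\le\varphi_2(d)/(1-\Omega)$. You would need to append this (or an equivalent stationarity-based) step to your proposal; everything before it can stand.
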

\begin{proof}
First note that if $N(k)\geq 1$, then $u(k)$ is determined by the
current state. If the processor is not available, then either  $u(k)$
has been determined by the states which are at most $\Lambda$ time stages old,
or $u(k)=\mathbf{0}_p$. Since the processor availability is independent of the state, the stochastic process $\{x_{[k,k-\Lambda+1]}\}$ is Markovian. Let ${\bf z}(k)\eq x_{[k,k-\Lambda+1]}$. From Assumption
\ref{ass:Continuity}, $\{{\bf z}\}_{\N_0}$ is also weak Feller.

 We first invoke Theorem 2.1 in \cite{yukmey13a} with $\K$ containing the sequence of
 stopping times. Since
\begin{equation}\label{bound0}
\E\{V(x(k_{i+1}))\,|\, x(k_i)=\chi\} \leq V(\chi) -(1-\Omega) V(\chi) + D,\quad  \forall \chi \in\R^n,
\end{equation}
and the sampled chain is weak Feller, it follows that   $\{x\}_{\K}$ admits an invariant probability measure.

Define $\tilde{V}({\bf z}(k)) \eq V(x(k))$. Now, note that by (\ref{eq:21d}), with $\Omega < 1$,
$\E\{\tilde{V}({\bf z}(k_{i+1}))\,|\, {\bf z}(k_i)=\chi\} \leq D+\Omega \tilde{V}(\chi)$, $\forall \chi$.
Thus,
$ \E\{\tilde{V}({\bf z}(k_{i+1}))\,|\, {\bf z}(k_i)=\chi\} \leq \tilde{V}(\chi)
-(1-\Omega) \tilde{V}(\chi) + D$, $\forall \chi$, and since $V$ is monotone
increasing and by Assumption \ref{ass:bound_prob}, there exists a compact set
$\mathcal{S}$ such that for $1 - \Omega > \zeta > 0$, 
$ \E\{V(x(k_{i+1}))\,|\, x(k_i)=x\} \leq V(x) - \zeta V(x) + D1_{x \in
  \mathcal{S}},\quad \forall x \in\R^n.$ 
Since $V(x(t))$ is bounded from below outside $\mathbb{B}_d$, and $x(k) \notin \mathbb{B}_d$ for $k \notin \K$, and that (\ref{driftCriterion2}) implies that for some $M_1 < \infty$
\begin{equation*}
  \E \Bigg\{ \sum_{k=k_i}^{k_{i+1}-1} V(x(k))\,\bigg|\,x(k_0)=\chi\Bigg\} \leq M_1,
\end{equation*}
it follows that $\sup_{x_{k_i}}\E\{k_{i+1}-k_i \,|\, x_{k_i}\} < \infty$. Finally, by Assumption \ref{ass:bound_prob}, if $x_t \in \mathcal{S}$ then $x_{[t+\Lambda-1,t]} \in \bar{\mathcal{S}}$ where $\bar{\mathcal{S}}$ is a compact set. Thus, Theorem 2.2 in \cite{yukmey13a} implies that there exists an invariant probability distribution, $\pi$, for  $\{{\bf z}\}_{\N_0}$.

%We proceed as follows. We need the preliminary result, that  concerns the state
%process at the times $k_i\in\K$.
%\begin{thm}
%Consider the system~(\ref{eq:15}) controlled via  Algorithm A$_1$
%  and suppose that the assumptions of Theorem~\ref{thm:anytime} and Assumption \ref{ass:Continuity} hold. There exists an invariant probability measure for the sampled Markov chain $\{x\}_{\K}$.
%%  $\{x_{[t,t+N-1]}, N_{[t,t+N-1]}\}$. Furthermore, under every such invariant probability measure
%%\[\E_{\pi}[V(x)] < \frac{D}{1-\Omega}.\]
%\end{thm}
%\begin{proof}
%Recall that by Lemma \ref{lem:Markov}, $\{x\}_{\K}$ is  Markovian. Furthermore, under Assumption \ref{ass:Continuity}, the sampled chain is weak Feller since for every sample path of controller availability, continuity is preserved (see the proof of Lemma \ref{lem:Markov}).
%Now, note that by (\ref{eq:21d}), with $\Omega < 1$,
%\[ \E\{V(x(k_{i+1}))\,|\, x(k_i)=\chi\} \leq D+\Omega V(\chi),\quad \forall \chi \in\R^n.\]
Since (\ref{bound0}) holds, with $P^m V(\chi) := \E\{V(x(k_m))|x(k_0)=\chi\}$, following arguments similar to the proof of Theorem 2.2 of \cite{yukmey13a}, for every realization of $x(k_0)$, it follows that
\begin{eqnarray*}
(1-\Omega) \limsup_{T \to \infty} {1 \over T} \E \Bigg\{ \sum_{i=0}^{T-1}
V(x(k_i))\Bigg\}
\leq \limsup_{T \to \infty} {1 \over T} \bigg( V(x(k_0)) + \sum_{i=0}^{T-1} D \bigg).
\end{eqnarray*}
Thus, $\limsup_{T \to \infty} (1/T) \sum_{m=0}^{T-1} P^m V(x(k_m)) \leq D/( 1-\Omega)$. Applying Fatou's lemma, we obtain
\begin{eqnarray*}
 \limsup_{T \to \infty} \E_{\pi}\Bigg\{ {1 \over T} \sum_{i=0}^{T-1}
 \min(N,V(x(k_i)))\Bigg\}\leq  \E_{\pi}\Bigg\{\limsup_{T \to \infty} {1 \over
   T}\sum_{i=0}^{T-1} \min(N,V(x(k_i)))\Bigg\} \leq {D \over 1-\Omega}. 
\end{eqnarray*}
Then, by the monotone convergence theorem, by letting $N \to \infty$,
\[\limsup_{T \to \infty} \E_{\pi}\Bigg\{{1 \over T}\sum_{i=0}^{T-1} V(x(k_i))\Bigg\} \leq {D \over 1-\Omega}.\]

Thus, there exists an invariant probability measure both for the original chain and for the sampled
 chain; under every such invariant probability measure $\pi$, $\E_{\pi}\{V(x)\} < D/( 1-\Omega)$.
\end{proof}

\section{Numerical Examples}
\label{sec:numerical-examples}

\begin{figure}[t]
  \begin{center}
      \includegraphics[width=.43\textwidth]{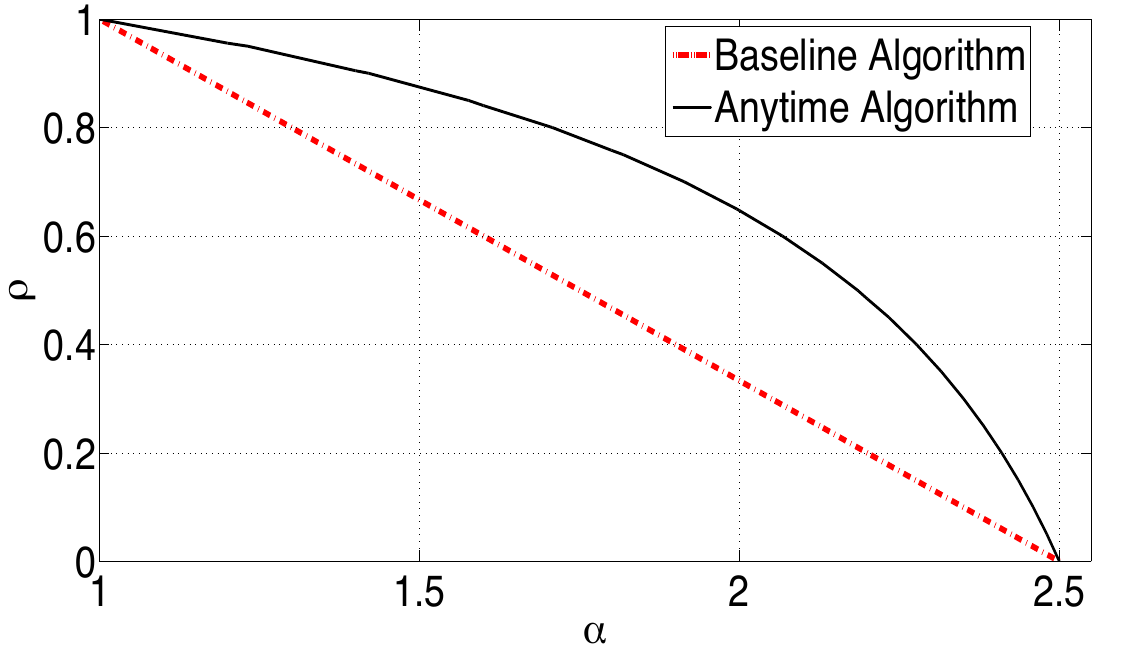}
      \caption{Boundaries of stability: $\Omega =1$ (solid line)  and
        $\Gamma =1$ (dashed).}
\label{fig:boundaries}
\end{center}\vspace{-10mm}
\end{figure}

\begin{figure}[t]
\begin{center}
     \includegraphics[width=.5\textwidth]{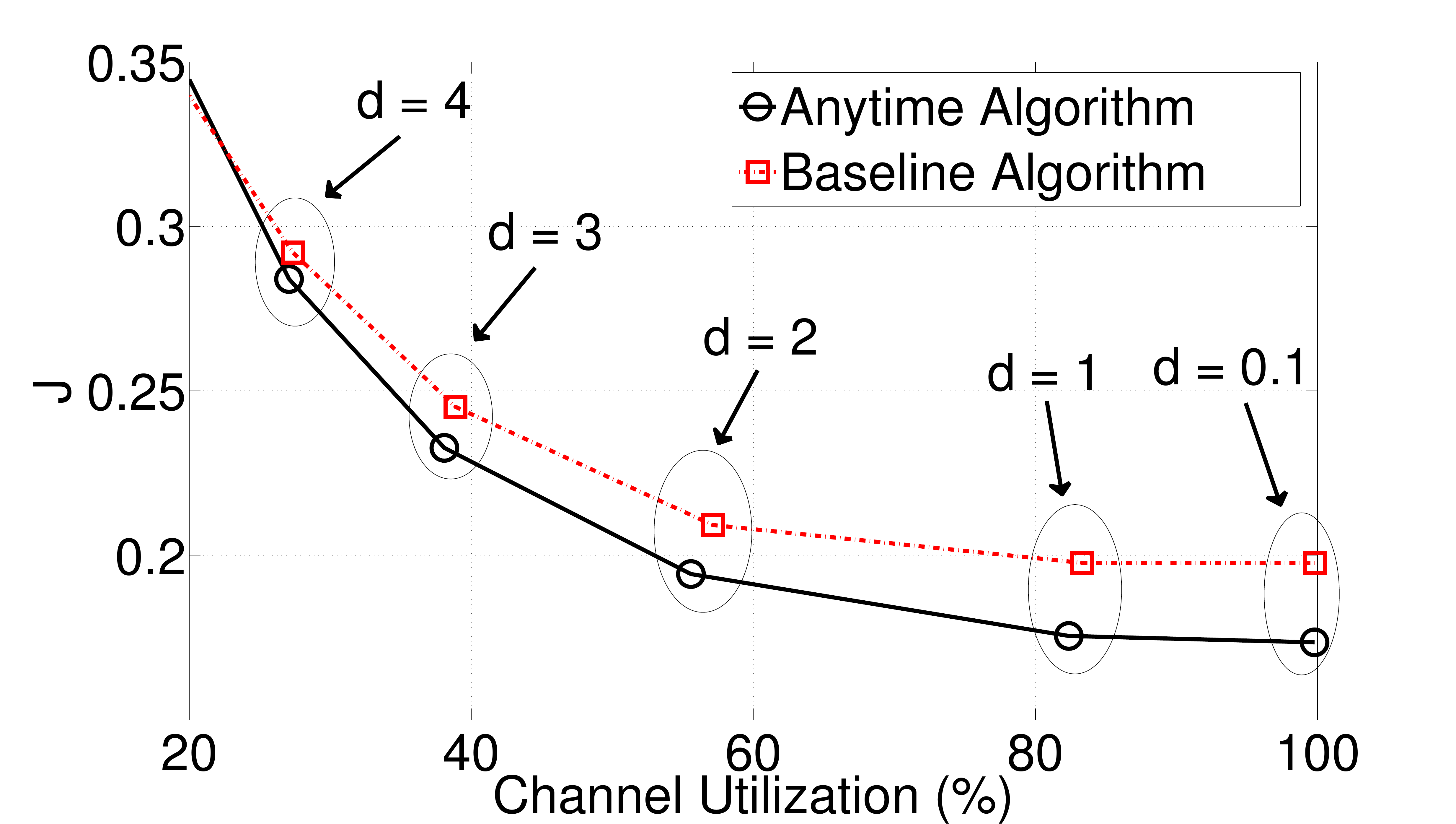}
    \caption{Empirical cost  versus   channel utilization for
      different values of $d$.}
    \label{fig:cost}
 \end{center}\vspace{-8mm}
\end{figure}
We first compare the stability conditions derived for a specific case. Suppose
that the buffer length is  given by
$\Lambda=4$, whereas
$p_i=0.2$, $i\in \{0,\dots,4\}$, and $q=0.75$. The stability region
boundaries, see~(\ref{eq:baseline_stability}) and~(\ref{eq:19}),
in terms of $\alpha$ and $\rho$ are depicted in Fig.~\ref{fig:boundaries}. It
can be seen that the guaranteed stable region (under the curve) provided by our
results is larger when using
Algorithm A$_1$ than when using~(\ref{eq:4}).
\par Next, we consider an open-loop unstable constrained plant model  of the
  form~(\ref{eq:process}), but with additive noise:
\begin{equation*}
 \begin{bmatrix}
  x_1(k+1)\\x_2(k+1)
\end{bmatrix}=% f(x(k),u(k),w(k)),   f(x ,u ,w) =
  \begin{bmatrix}
    x_2(k)+u_1(k)\\-{\rm{sat}}(x_1(k)+x_2(k))+u_2(k)
  \end{bmatrix}+
 \begin{bmatrix}
  w_1(k)\\w_2(k)
\end{bmatrix}
\end{equation*}
where
\begin{equation*}
  {\rm{sat}}(\mu)=
  \begin{cases}
    -10,&\text{if $\mu<-10$,}\\
    \mu & \text{if $\mu \in [-10,10]$,}\\
    10,&\text{if $\mu>10$},
  \end{cases}
\end{equation*}
see\cite[Example 2]{quegup13a}. The initial condition $x(0)$ and the disturbance
$w(k)$ are zero-mean i.i.d.\ Gaussian with unit covariance. The control policy
$\kappa$ is taken
as $\kappa(x)=[-x_2\quad 0.505{\rm{sat}}(x_1+x_2)]^T$, $x\in\R^2$. If we choose
$V(x)=2|x|$, then direct calculations give that
\begin{equation*}
 \begin{split}
   V&\big(f(x,\kappa(x)) \big) =
  0.99| {\rm{sat}} (x_1+x_2)| \leq  0.99 | x_1+x_2|\\
  &\leq 1.98\max\{|x_1|,|x_2|\}-\max\{|x_1|,|x_2|\}+|x|\leq 1.98|x|.
\end{split}
\end{equation*}
Thus, Assumption~\ref{ass:CLF} holds with $\rho = 0.99$, and
$\varphi_1(s)=\varphi_2(s)=2s$.  Processor availability and $\Lambda$ are taken as above, but we now set
$q=0.4$. Performance is evaluated through the  empirical cost
$J\triangleq\frac{1}{50}\left(\sum_{k=0}^{49}|x(k)|^2\right)$ and
the Channel Utilization~(\%), calculated as
\begin{equation*}
 \frac{\text{Total number of time steps at which $\beta(k)\neq 2$}}{\text{Total number of time steps}}~ (\%).
\end{equation*}
By averaging over $10^4$
realizations,  Fig.~\ref{fig:cost} is obtained. As can be seen in that figure,
the proposed event-based
anytime control algorithm gives better trade-offs between empirical cost and
channel utilization.

\section{Conclusions}
\label{sec:conclusions}
This work considered the control of a non-linear process with both communication
and processing constraints. A sensor node transmits data to the controller
across a channel that stochastically erases data. The control algorithm is
executed over a processor that can provide only limited, time-varying and a
priori unknown processing resources. To reduce the communication frequency, the
sensor utilizes an event-triggered scheme. Similarly, to better utilize the
processor availability, the control input is calculated by using an anytime
control algorithm. For the resulting system, we present stochastic stability and
stationarity results. Numerical studies illustrate that significant performance
gains can be obtained by using the proposed algorithm. Future work includes the extension of the analysis to noisy systems, and establishing further stability properties such as ergodicity and rates of convergence to equilibrium.

%\par Future work could includefurther studying performance aspects, and also
%using the obtained results for processor scheduling. Moreover, the assumption of the processor availability sequence being i.i.d. can likely be relaxed to consider a Markov chain based correlated model.
% Future work could include examining situations where system assumptions hold only
%locally.%
%  We may even be able to
%     consider correlated processor availability, by using a Markov Chain model
%     which is re-set at appropriate instants. Perhaps, the re-setting could be at
%     times when the plant state has just moved
%   out of $\mathbb{B}_d$, a packet is received  and, thus, the controller switches
%   on.

\end{document}